\newtheorem{theorem}{Theorem}
\newtheorem{proposition}{Proposition}
\newtheorem{lemma}{Lemma}
\newtheorem{remark}{Remark}
\theoremstyle{definition}
\theoremstyle{remark}
\title{Genus two curves covering elliptic curves: a computational approach}
\author{T. Shaska}
\address{Department of Mathematics\\
Oakland University\\
Rochester, MI, 48309-4485.}
\email{shaska@oakland.edu }
\def\bP{\mathbb P}
\def\bC{\mathbb C}
\def\C{\mathcal C}
\def\Z{\mathbb Z}
\def\X{\mathcal X}
\def\H{\mathcal H}
\def\M{\mathcal M}
\def\L{\mathcal L}
\def\iso{{\, \cong\, }}
\def\lar{\longrightarrow}
\def\bZ{{\mathbb Z}}
\def\s{\sigma}
\def\X{\mathcal X}
\def\H{\mathcal H}
\def\Hs{\mathcal H_\s}
\def\A{\mathcal A}
\def\D{\Delta}
\def\u{\mathfrak u}
\def\({\left(}
\def\){\right)}
\def\cO{{\mathcal O}}
\def\a{\alpha}
\def\bG{\bold G}
\def\g{\gamma}
\def\e{\varepsilon}
\def\P{\mathcal P}
\def\r{\delta}  
\def\<{\langle}
\def\>{\rangle}
\def\sem{\rtimes }
\def\a{{\alpha}}
\def\th{{\theta}}
\def\sem{{\rtimes}}
\def\iso{{\, \cong\, }}
\def\e{{\xi}}
\def\r{{r}}
\def\s{\sigma}
\def\P{\mathcal P}
\def\<{\langle}
\def\>{\rangle}
\def\_u{{\mathfrak u}}
\def\J{J_{48}}
\def\U{\mathcal U}
\def\u{{u}}
\def\v{{v}}
\def\i{i}
\begin{document}

\maketitle

\begin{abstract}
A genus 2 curve $C$ has an elliptic subcover if there exists a degree $n$ maximal covering $\psi: C \to E$ to
an elliptic curve $E$. Degree $n$ elliptic subcovers occur in pairs  $(E, E')$. The Jacobian $J_C$ of $C$ is
isogenous of degree $n^2$ to the product $E \times E'$. We say that $J_C$ is $(n, n)$-split. The locus of
$C$, denoted by $\L_n$, is an algebraic subvariety of the moduli space $\M_2$. The space $\L_2$ was studied
in Shaska/V\"olklein \cite{SV1} and Gaudry/Schost \cite{GS}. The space $\L_3$ was studied in \cite{Sh1} were
an algebraic description was given as sublocus of $\M_2$.

In this survey  we give a brief description of the spaces $\L_n$ for  a general $n$ and then focus on small
$n$. We describe some of  the computational details which were skipped in \cite{SV1} and \cite{Sh1}. Further
we explicitly describe the relation between the elliptic subcovers $E$ and $E'$.  We have implemented most of
these relations in computer programs which check easily whether a genus 2 curve has  $(2, 2)$ or $(3, 3)$
split Jacobian. In each case the elliptic subcovers can be explicitly computed.
\end{abstract}

\section{Introduction}

Let $C$ be a genus 2 curve defined over an algebraically closed field $k$, of characteristic zero. Let $\psi: C
\to E$ be a degree $n$ maximal covering (i.e. does not factor through an isogeny) to an elliptic curve $E$ defined
over $k$. We say that $C$ has a \emph{degree n elliptic subcover}. Degree $n$ elliptic subcovers occur in pairs.
Let $(E, E')$ be such a pair. It is well known that there is an isogeny of degree $n^2$ between the Jacobian $J_C$
of $C$ and the product $E \times E'$. We say that $C$ has \textbf{(n,n)-split Jacobian}. The locus of such $C$,
denoted by $\L_n$, is a 2-dimensional algebraic subvariety of the moduli space $\M_2$ of genus two curves.

In this survey we study the genus 2 curves with $(n, n)$-split Jacobian for small $n$. While such curves have been
studied by many authors, our approach is simply computational. Some of the results have appeared in previous
articles of the author.

Curves of genus 2 with elliptic subcovers go back to Legendre and Jacobi. Legendre, in his \emph{Th\'eorie
des fonctions elliptiques}, gave the first example of a genus 2 curve with degree 2 elliptic subcovers. In a
review of Legendre's work, Jacobi (1832) gives a complete description for $n=2$. The case $n=3$ was studied
during the 19th century from Hermite, Goursat, Burkhardt, Brioschi, and Bolza. For a history and background
of the 19th century work see Krazer \cite[pg. 479]{Kr}. Cases when $n> 3$ are more difficult to handle. Frey
and Kani note the difficulty to get explicit examples, see Frey~\cite{Fr} and Frey, Kani \cite{FK}.


In \S 2 we give a brief description of genus 2 curves and their isomorphism classes which are classified by the
absolute invariants of binary sextics. Further, we display the list of groups that occur as full automorphism
groups of genus 2 curves defined over a field of characteristic $\neq 2$.

In \S 3 we study degree $n$ covers $\C \to E$ from a genus 2 curve to an elliptic curve. Such covers induce a
degree $n$ covering $\phi: \bP^1 \to \bP^1$. A careful study of such covers leads to determining an equation for
the curves $\C$. The covering $\phi: \bP^1 \to \bP^1$ could have different ramification structure. All such
structures are described in section 3.

The moduli space of coverings $\phi: \bP^1 \to \bP^1$ with fixed ramification structure is a Hurwitz space. The
irreducibility of such space, dimension, and the genus (in the case 1-dimensional spaces) can be computed via the
braid action.  For $n$ an odd integer we display such results in section 4. There is a natural morphism between
the Hurwitz space and the locus $\L_n$ (cf. \S 4). In the second part of section 4 we describe the correspondence
between the points of $\L_n$ and the Humbert space of discriminant $n^2$ which we denote by $\H_{n^2}$.

In section 5 we study genus 2 curves with degree 2 elliptic subcovers. Jacobi \cite{J} gives a general form of
such curves: $Y^2=X^6-s_1X^4+s_2X^2-1$, and a description of $\L_2$ in terms of the cross ratios of the roots
$\a_1, \dots , \a_6$ of the sextic:
$$ \frac {\a_3-\a_1} {\a_3-\a_2} : \frac {\a_4-\a_1} {\a_4-\a_2}= \frac {\a_5-\a_1} {\a_5-\a_2} : \frac
{\a_6-\a_1} {\a_6-\a_2}
$$
Thus, $\L_2$ is parameterized by the pair $(s_1, s_2)\in k^2$. We note that this parametrization of $\L_2$ factors
through a ramified Galois covering: $k^2 \lar k^2$, $(s_1,s_2) \to (u,v)$, where $u=s_1 s_2$ and $v=s_1^3+s_2^3$.
This induces a birational parametrization of $\L_2$ by the pairs $(u,v)$. All our computations use these
coordinates $(u, v)$. We use this to compute an equation for $\L_2$ in terms of the classical invariants. We give
a general relation between the j-invariants of degree 2 elliptic subfields of $K$. This improves \cite{GS}, where
each isomorphism type of $G$ is treated separately. We determine conditions when degree 2 elliptic subfields of
$K$ are 2 or 3-isogenous. For a generalization of such invariants $u$, and $v$ see Remark~\ref{remark1} or
Gutierrez/Shaska \cite{GS}.


In section 6, we study the case $n=3$. We show that every genus 2 curve with a degree 3 elliptic subcover can
be written in the form
$$Y^2=(X^3+aX^2+bX+1)(4X^3+b^2X^2+2bX+1 ) $$
for $a,b \in k$. So $\L_3$ is parameterized by the pairs $(a, b) \in k^2$. The invariants of the two cubics
$r_1, r_2$  give a birational parametrization of $\L_3$. This parametrization of $\L_3$ factors through
ramified Galois coverings of degree 3 (resp. 2)
\begin{equation}
\begin{split}
k^2 & \to k^2 \to k^2\\
(a,b) & \to (u,v) \to (r_1,r_2)
\end{split}
\end{equation}
where $ab=u$ and $b^3=v$. The equation of $\L_3$ is computed in terms of the absolute invariants and is
displayed in \cite[Appendix A]{Sh1}. If $\C\in \L_3$ then $Aut( \C) $ is isomorphic to $\bZ_2, V_4$, $D_4$ or
$D_6$. Moreover, there are exactly six genus 2 curves with automorphism group $ D_4$ or $D_6$. The rational
models of these 12 curves and rational points on them were studied in \cite{Sh6}. We determine the
j-invariants of the elliptic subcovers and show that they satisfy the Fricke polynomial of level 2.

In the last section we give information on computer programs that we have made available for such computations.

\clearpage

\section{Preliminaries}
Let $k$ be an algebraically closed field of  characteristic zero and $C$ a genus 2 curve defined over $k$. Then
$C$ can be described as a double cover of $\bP^1(k)$ ramified in 6 places $w_1, \dots , w_6$. This sets up a
bijection between isomorphism classes of genus 2 curves and unordered distinct 6-tuples $w_1, \dots , w_6 \in
\bP^1 (k)$ modulo automorphisms of $\bP^1 (k) $. An unordered 6-tuple $\{w_i\}_{i=1}^6$ can be described by a
binary sextic (i.e. a homogenous equation $f(X,Z)$ of degree 6). Let  $\M_2$ denote the moduli space of genus 2
curves. To describe $\M_2$ we need to find polynomial functions of the coefficients of a binary sextic $f(X,Z)$
invariant under linear substitutions in $X,Z$ of determinant one. These invariants were worked out by Clebsch and
Bolza in the case of zero characteristic  and generalized by Igusa for any characteristic different from 2; see
\cite{Bo}, \cite{Ig}, or \cite{SV1} for a more modern treatment.

Consider a binary sextic, i.e. a homogeneous polynomial $f(X,Z)$ in $k[X,Z]$ of degree 6:
$$f(X,Z)=a_6 X^6+ a_5 X^5Z+\dots  +a_0 Z^6.$$
\emph{Igusa  $J$-invariants} $\, \, \{ J_{2i} \}$ of $f(X,Z)$ are homogeneous polynomials of degree $2i$ in
$k[a_0, \dots , a_6]$, for $i=1,2,3,5$; see \cite{Ig}, \cite{SV1} for their definitions. Here $J_{10}$ is simply
the discriminant of $f(X,Z)$. It vanishes if and only if the binary sextic has a multiple linear factor. These
$J_{2i}$    are invariant under the natural action of $SL_2(k)$ on sextics. Dividing such an invariant by another
one of the same degree gives an invariant under $GL_2(k)$ action.

Two genus  2 fields $K$ (resp., curves) in the standard form $Y^2=f(X,1)$ are isomorphic if and only if the
corresponding sextics are $GL_2(k)$ conjugate. Thus if $I$ is a $GL_2(k)$ invariant (resp., homogeneous $SL_2(k)$
invariant), then the expression $I(K)$ (resp., the condition $I(K)=0$) is well defined. Thus the $GL_2(k)$
invariants are functions on the  moduli space $\mathcal M_2$ of genus 2 curves. This $\mathcal M_2$ is an affine
variety with coordinate ring
$$k[\mathcal M_2]=k[a_0, \dots , a_6, J_{10}^{-1}]^{GL_2(k)}$$
which is  the subring of degree 0 elements in $k[J_2, \dots ,J_{10}, J_{10}^{-1}]$. The \emph{ absolute
invariants}
$$ i_1:=144 \frac {J_4} {J_2^2}, \, \,  i_2:=- 1728 \frac {J_2J_4-3J_6} {J_2^3},\, \,  i_3 :=486 \frac {J_{10}}
{J_2^5}, $$
are even $GL_2(k)$-invariants. Two genus 2 curves with $J_2\neq 0$ are isomorphic if and only if they have
the same absolute invariants. If  $J_2=0 $ then we can define new invariants as in \cite{Sh2}. For the rest
of this paper if we say ``there is a genus 2 curve $\C$ defined over $k$'' we will mean the $k$-isomorphism
class of $\C$. We have the following; see \cite[Theorem 2]{SV1}.

\begin{lemma}\label{groups} The  automorphism group $G$ of a genus 2 curve $\C$ in characteristic $\ne2$ is
isomorphic  to \ $\bZ_2$, $\bZ_{10}$, $V_4$, $D_8$, $D_{12}$, $\bZ_3 \sem D_8$, $ GL_2(3)$, or $2^+S_5$. The case
when $G \iso 2^+S_5$ occurs only in characteristic 5. If $G \iso \bZ_3 \sem D_8$ (resp., $ GL_2(3)$) then $\C$ has
equation $Y^2=X^6-1$ (resp., $Y^2=X(X^4-1)$). If $G \iso \bZ_{10}$ then $\C$ has equation $Y^2=X^6-X$.
\end{lemma}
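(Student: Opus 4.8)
The plan is to exploit the hyperelliptic structure recalled above. Every genus $2$ curve $\C$ is a double cover $\pi\colon \C \to \bP^1$ ramified at the six Weierstrass points $w_1,\dots,w_6$, and the covering involution $\iota$ (the hyperelliptic involution) is the unique such involution, hence central in $G=\Aut(\C)$, since any automorphism conjugates $\iota$ to a hyperelliptic involution, which must again be $\iota$. So the first step is to pass to the reduced group $\bAut(\C):=G/\langle\iota\rangle$. As $\iota$ acts trivially on $\bP^1=\C/\langle\iota\rangle$, the group $\bAut(\C)$ acts faithfully on $\bP^1$ and permutes the branch set $B=\{w_1,\dots,w_6\}$; thus it is a finite subgroup of $PGL_2(k)$ stabilizing a $6$-element set. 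Conversely, since $\C$ is the unique double cover of $\bP^1$ branched at $B$, every Möbius transformation stabilizing $B$ lifts to $\C$, so $\bAut(\C)$ is exactly the stabilizer of $B$ in $PGL_2(k)$. This reduces the lemma to a question about $PGL_2(k)$ together with the central extension $1\to\langle\iota\rangle\to G\to\bAut(\C)\to 1$.

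The second step runs through the classification of finite subgroups of $PGL_2(k)$. In characteristic zero (and whenever $p\nmid|\bAut(\C)|$) these are the cyclic, dihedral, $A_4$, $S_4$ and $A_5$ groups; in positive characteristic Dickson's theorem adds $PSL_2(\F_q)$, $PGL_2(\F_q)$ and certain $p$-semidirect products. For each candidate I would impose the decisive constraint that it stabilize a set of exactly six points. An orbit count eliminates most cases and reveals an enhancement phenomenon: a $6$-point set invariant under $\bZ_3$, $\bZ_4$, $\bZ_6$, $D_4$ or $A_4$ is forced to carry strictly larger symmetry — two $\bZ_3$-orbits of size $3$ are always interchanged by an involution $X\mapsto ab/X$, while the only $6$-point invariant sets for $\bZ_4,\bZ_6,D_4,A_4$ are the octahedron $\{0,\infty,\pm1,\pm i\}$ or the hexagon of sixth roots of unity, whose stabilizers are $S_4$ and $D_{12}$. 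Likewise $A_5$ and the groups $\bZ_n,D_n$ with $n$ large stabilize no $6$-point set. What survives is the short list $\bAut(\C)\iso 1,\ \bZ_2,\ \bZ_5,\ V_4,\ S_3,\ D_{12},\ S_4$, together with the exceptional $PGL_2(\F_5)\iso S_5$ acting on the six points $\bP^1(\F_5)$, which exists only in characteristic $5$.

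The third step is to realize each surviving $\bAut(\C)$ by a normal form and resolve the extension. Using $PGL_2(k)$ I would move $B$ into standard position — the sixth roots of unity for $D_{12}$ giving $Y^2=X^6-1$, the octahedron for $S_4$ giving $Y^2=X(X^4-1)$, and $\{0\}\cup\mu_5$ for $\bZ_5$ giving $Y^2=X^6-X$ — and read off the automorphisms explicitly. Writing the lifts of generators of $\bAut(\C)$ as maps $(X,Y)\mapsto(\cdot,\cdot)$ and composing them produces the defining relations of $G$, equivalently its class in $H^2(\bAut(\C),\bZ_2)$; this is what distinguishes $D_8$ from $\bZ_2^3$ over $V_4$, or $GL_2(3)$ from the other double covers of $S_4$. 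Carrying this out case by case yields exactly $\bZ_2,\ V_4,\ \bZ_{10},\ D_8,\ D_{12},\ \bZ_3\sem D_8,\ GL_2(3)$ in characteristic $\neq 2,5$, and the additional group $2^+S_5$ in characteristic $5$.

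The main obstacle is this last step: conjugacy in $PGL_2$ only pins down $\bAut(\C)$, and determining the precise isomorphism type of $G$ — for example proving that the lift of $S_4$ is $GL_2(3)$ rather than another order-$48$ cover — genuinely requires the explicit action on the $Y$-coordinate, since the relevant cocycle is invisible at the level of $\bP^1$. A secondary difficulty is the characteristic-$5$ case, where one must invoke Dickson's list in place of the five classical families and verify that $PGL_2(\F_5)\iso S_5$ is the only extra subgroup admitting a genus $2$ double cover.
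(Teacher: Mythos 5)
Your plan is correct and is essentially the standard argument: the paper itself gives no proof of this lemma, citing \cite[Theorem 2]{SV1}, and the proof there proceeds exactly as you outline --- reduce to $\bAut(\C)=G/\langle\iota\rangle$ acting on $\bP^1$ as the stabilizer of the six branch points, run through the finite subgroups of $PGL_2(k)$ (with Dickson's list in positive characteristic) using orbit counts and the enhancement of symmetry of the invariant $6$-point configurations, and then pin down the isomorphism type of the central extension by writing the lifts explicitly on the $Y$-coordinate. Your identification of the extension problem (e.g.\ $GL_2(3)$ versus the other double covers of $S_4$, or $V_4$ versus $\bZ_4$) as the step that genuinely requires the normal forms $Y^2=X^6-1$, $Y^2=X(X^4-1)$, $Y^2=X^6-X$ is exactly where the cited proof does its case-by-case computation.
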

%

\section{Curves of genus 2 with split Jacobians}

Let $C$ and $E$ be curves of genus 2 and 1, respectively. Both are smooth, projective curves defined over $k$,
$char(k)=0$. Let $\psi: C \longrightarrow E$ be a covering of degree $n$. From the Riemann-Hurwitz formula,
$\sum_{P \in C}\, (e_{\psi}\,(P) -1)=2$ where $e_{\psi}(P)$ is the ramification index of points $P \in C$, under
$\psi$. Thus, we have two points of ramification index 2 or one point of ramification index 3. The two points of
ramification index 2 can be in the same fiber or in different fibers. Therefore, we have the following cases of
the covering $\psi$:\\

\textbf{Case I:} There are $P_1$, $P_2 \in C$, such that $e_{\psi}({P_1})=e_{\psi}({P_2})=2$, $\psi(P_1) \neq
\psi(P_2)$, and     $\forall  P \in C\setminus \{P_1,P_2\}$,  $e_{\psi}(P)=1$.

\textbf{Case II:} There are $P_1$, $P_2 \in C$, such that $e_{\psi}({P_1})=e_{\psi}({P_2})=2$, $\psi(P_1) =
\psi(P_2)$, and     $\forall  P \in C\setminus \{P_1,P_2\}$,  $e_{\psi}(P)=1$.

\textbf{Case III:} There is $P_1 \in C$ such that $e_{\psi}(P_1)=3$, and $ \forall P \in C \setminus \{P_1\}$,
$e_{\psi}(P)=1$.\\

\noindent In case I (resp. II, III) the cover $\psi$ has 2 (resp. 1) branch points in E.

Denote the hyperelliptic involution of $C$ by $w$. We choose $\mathcal O$ in E such that $w$ restricted to
$E$ is the hyperelliptic involution on $E$. We denote the restriction of $w$ on $E$ by $v$, $v(P)=-P$. Thus,
$\psi \circ w=v \circ \psi$. E[2] denotes the group of 2-torsion points of the elliptic curve E, which are
the points fixed by $v$. The proof of the following two lemmas is straightforward and will be omitted.

\begin{lemma} \label{lem_1}
a) If $Q \in E$, then $\forall P \in \psi^{-1}(Q)$, $w(P) \in \psi^{-1}(-Q)$.

b) For all $P\in C$, $e_\psi(P)=e_\psi\,({w(P)})$.
\end{lemma}

Let $W$ be the set of points in C fixed by $w$. Every curve of genus 2 is given, up to isomorphism, by a binary
sextic, so there are 6 points fixed by the hyperelliptic involution $w$, namely the Weierstrass points of $C$. The
following lemma determines the distribution of the Weierstrass points in fibers of 2-torsion points.

\begin{lemma}\label{lem2} The following hold:
\begin{enumerate}
\item $\psi(W)\subset E[2]$
\item If $n$ is an odd number then
 i) $\psi(W)=E[2]$
 ii) If $ Q \in E[2]$ then \#$(\psi^{-1}(Q) \cap W)=1 \mod (2)$
\item If $n$ is an even number then for all $Q\in E[2]$, \#$(\psi^{-1}(Q) \cap W)=0 \mod (2)$
\end{enumerate}
\end{lemma}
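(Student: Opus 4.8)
The plan is to reduce parts (2) and (3) to a single parity statement, namely that for every $Q \in E[2]$ one has
$$ \#\left(\psi^{-1}(Q) \cap W\right) \equiv n \pmod 2, $$
while part (1) is a short direct computation. For (1), if $P \in W$ then $w(P)=P$, so the relation $\psi \circ w = v \circ \psi$ gives $\psi(P) = v(\psi(P)) = -\psi(P)$; hence $2\psi(P) = \cO$ and $\psi(P) \in E[2]$.

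To establish the congruence I would work with the pullback divisor $\psi^*(Q) = \sum_{P \in \psi^{-1}(Q)} e_\psi(P)\, P$, which has degree $n$. Since $Q \in E[2]$ satisfies $v(Q) = -Q = Q$, the equivariance $\psi \circ w = v \circ \psi$ shows that $\psi^*(Q)$ is $w$-invariant; equivalently, by Lemma~\ref{lem_1}, $w$ permutes the fiber $\psi^{-1}(Q)$ while preserving ramification indices. Grouping the fiber into $w$-orbits, the points not fixed by $w$ occur in pairs $\{P, w(P)\}$ with $e_\psi(P) = e_\psi(w(P))$, each pair contributing the even amount $2\,e_\psi(P)$ to the degree, whereas the $w$-fixed points are exactly the Weierstrass points in the fiber. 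Therefore $n \equiv \sum_{P \in \psi^{-1}(Q) \cap W} e_\psi(P) \pmod 2$.

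The main technical point is to show that $e_\psi(P)$ is odd for each Weierstrass point $P$ lying over a $2$-torsion point $Q$. I would argue locally: choose a coordinate $t$ at $P$ in which the involution $w$ acts as $t \mapsto -t$, and a coordinate $s$ at $Q$ in which $v$ acts as $s \mapsto -s$, which is possible precisely because $P$ and $Q$ are fixed points of the respective order-two involutions. The equivariance then forces the local expansion $s = \psi(t)$ to be an odd power series, so its vanishing order $e_\psi(P)$ is odd. Consequently every term in the sum above is odd, and the sum is congruent to the number of terms, yielding $n \equiv \#\left(\psi^{-1}(Q) \cap W\right) \pmod 2$.

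It then remains to read off the statements. If $n$ is even, the congruence gives $\#(\psi^{-1}(Q) \cap W) \equiv 0 \pmod 2$ for all $Q \in E[2]$, which is part (3). If $n$ is odd, it gives $\#(\psi^{-1}(Q) \cap W) \equiv 1 \pmod 2$ for all $Q \in E[2]$, which is (2)(ii); in particular this count is positive, so each $Q \in E[2]$ has a Weierstrass preimage, giving $E[2] \subseteq \psi(W)$, and combined with part (1) this forces $\psi(W) = E[2]$, which is (2)(i). The hard part is the local oddness of $e_\psi(P)$ at Weierstrass points over $E[2]$; once that is in hand, the remainder is bookkeeping of the $w$-action on fibers.
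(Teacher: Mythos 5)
Your proof is correct, and since the paper explicitly omits the proof of this lemma as ``straightforward,'' there is no competing argument to compare it with: your parity computation on the degree-$n$ divisor $\psi^*(Q)$, pairing non-Weierstrass points via $w$ and using the local linearization $f(-t)=-f(t)$ to see that $e_\psi(P)$ is odd at each Weierstrass point over a $2$-torsion point, is exactly the standard argument the author had in mind and it cleanly yields all three parts. The only point worth making explicit is that the linearization of $w$ and $v$ at their fixed points uses characteristic zero (or $\ne 2$), which is the paper's standing hypothesis.
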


Let $\pi_C: C \lar \bP^1$ and $\pi_E:E \lar \bP^1$ be the natural degree 2 projections. The hyperelliptic
involution permutes the points in the fibers of $\pi_C$ and $\pi_E$. The ramified points of $\pi_C$, $\pi_E$
are respectively points in $W$ and $E[2]$ and their ramification index is 2. There is $\phi:\bP^1 \lar \bP^1$
such that the diagram commutes.
\begin{equation}
\begin{matrix}
C & \buildrel{\pi_C}\over\lar & \bP^1\\
\psi \downarrow &  & \downarrow \phi \\
E & \buildrel{\pi_E}\over\lar & \bP^1
\end{matrix}
\end{equation}
Next, we will determine the ramification of induced coverings $\phi:\bP^1 \lar \bP^1$. First we fix some
notation. For a given branch point we will denote the ramification of points in its fiber as follows. Any
point $P$ of ramification index $m$ is denoted by $(m)$. If there are $k$ such points then we write $(m)^k$.
We omit writing symbols for unramified points, in other words $(1)^k$ will not be written. Ramification data
between two branch points will be separated by commas. We denote by $\pi_E (E[2])=\{q_1, \dots , q_4\}$ and
$\pi_C(W)=\{w_1, \dots ,w_6\}$.

\subsubsection{The Case When $n$ is Odd}
The following theorem classifies the ramification types for the induced coverings $\phi:\bP^1 \lar \bP^1$
when the degree $n$ is odd.

\begin{theorem} \label{thm1}
Let $\psi:C \lar E$ be a covering of odd degree $n$ and $\phi: \bP^1 \lar \bP^1$ be the induced covering
induced by $\psi$. This induces a partitioning of the set of 6 Weierstrass points of $C$ into two sets
$W^{(1)}=W^{(1)}(C, E)$ and $W^{(2)}=W^{(2)}(\C, E)$, each of cardinality 3 such that $|\phi(W^{(1)})|=1$ and
$|\phi(W^{(2)})|=3$. Then the ramification structure of $\phi$ is as follows.
\begin{description}
\item[Case I] (the generic case)

 $\left ( (2)^\frac {n-1} 2 , (2)^\frac {n-1} 2 , (2)^\frac {n-1} 2 , (2)^ \frac {n-3} 2 ,(2)^1
\right ) $
\par Or  the following degenerate cases:

\item[Case II] (the 4-cycle case and the dihedral case)

 i) $ \left ((2)^\frac {n-1 }{ 2} ,(2)^\frac{n-1 }{ 2} , (2)^\frac {n-1 }{ 2} , (4)^1 (2)^\frac {n-7
}{ 2} \right )$

 ii) $\left ((2)^\frac {n-1}{ 2} , (2)^\frac {n-1}{2} ,
      (2)^\frac {n-1}{ 2}, (2)^\frac {n-1} 2 \right )$

 iii) $\left ((2)^\frac {n-1}{ 2} , (2)^\frac {n-1}{2} ,
      (4)^1  (2)^\frac {n-5}{ 2}, (2)^\frac {n-3} 2 \right )$

\item[Case III] (the 3-cycle case)

 i) $ \left ((2)^\frac {n-1 }{ 2} , (2)^\frac {n-1 }{ 2} , (2)^\frac {n-1}{ 2} , (3)^1 (2)^\frac {n-5
}{ 2} \right )$

 ii) $\left ( (2)^\frac {n-1} 2 , (2)^\frac {n-1} 2 , (3)^1 (2)^\frac {n-3} 2, (2)^\frac {n-3} 2
\right )$
\end{description}
\end{theorem}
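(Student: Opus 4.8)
The plan is to combine the Riemann--Hurwitz formula for $\phi$ with the ramification constraints forced by the commutative diagram, and then enumerate the finitely many ways the ramification over the four images of $E[2]$ can be distributed. First I would apply Riemann--Hurwitz to $\phi:\bP^1\to\bP^1$, a degree $n$ cover with $n$ odd: since both source and target have genus $0$, we get $\sum_P(e_\phi(P)-1)=2n-2$. The essential input from the diagram is that $\phi$ must carry the four ramification values $q_1,\dots,q_4$ of $\pi_E$ to the ramification values of $\pi_C$, and that the ramification of $\psi$ (at most two points of index $2$, or one of index $3$, by Riemann--Hurwitz on $\psi$) imposes exactly which fibers over the $q_i$ can deviate from the generic pattern.

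The key step is to pin down the fibers $\phi^{-1}(q_i)$. Over a point $q_i$ whose preimage in $E$ consists of $2$-torsion, the commutativity $\pi_E\circ\psi=\phi\circ\pi_C$ together with Lemma~\ref{lem2}(2) (for $n$ odd, $\psi(W)=E[2]$ and each fiber $\psi^{-1}(Q)\cap W$ has odd cardinality) forces each such fiber of $\phi$ to contain an odd number of Weierstrass images, hence at least one odd-index point, so the multiplicities must be arranged as $(2)^{(n-1)/2}$ in the generic situation — this accounts for the base partition of $W$ into $W^{(1)}$ (whose three points collapse to a single branch point, giving $|\phi(W^{(1)})|=1$) and $W^{(2)}$ (whose three images are distinct, giving $|\phi(W^{(2)})|=3$). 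I would then check that the totally generic choice $(2)^{(n-1)/2}$ over all four points consumes $4\cdot\frac{n-1}{2}=2n-2$ in the Riemann--Hurwitz sum, but since one fiber must have odd cardinality and $n$ is odd, I must track the parity carefully: three fibers realize $(2)^{(n-1)/2}$ and the last two must split the remaining ramification, producing the $(2)^{(n-3)/2},(2)^1$ tail of Case~I.

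The enumeration of the degenerate cases is then a matter of reallocating ramification while respecting (i) the total $2n-2$, (ii) the parity constraint in each fiber from Lemma~\ref{lem2}, and (iii) the restriction that $\psi$ itself is ramified in the prescribed minimal way, so only one ``extra'' unit of ramification can be created. Merging two index-$2$ points into one index-$4$ point (Case~II(i),(iii)) or an index-$3$ point (Case~III) each corresponds to the single admissible degeneration of $\phi$ compatible with $\psi$ having a point of index $2$ in the same fiber or an index-$3$ point respectively; the dihedral case II(ii) arises when the two ramification points of $\psi$ sit over distinct branch points and the $(2)^1$ tail of Case~I is absorbed, yielding four equal fibers $(2)^{(n-1)/2}$. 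For each candidate I would verify the Riemann--Hurwitz count and the parity condition, and discard anything violating them.

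The main obstacle I anticipate is the bookkeeping of parities: ensuring that in every fiber over a $q_i$ the number of ramification points of odd index is consistent with $\#(\psi^{-1}(Q)\cap W)\equiv1\pmod 2$ (Lemma~\ref{lem2}(2ii)), while simultaneously matching the global Riemann--Hurwitz total and the very rigid ramification profile of $\psi$. In particular, showing that the listed cases are \emph{exhaustive} — that no other distribution of the extra ramification survives both the parity and the degree constraints — is the delicate part; the existence of each listed type is comparatively easy, but ruling out spurious configurations requires a careful case analysis of how the $\le 2$ ramification points of $\psi$ can distribute among the fibers $\pi_C^{-1}(w_i)$ and how they descend through the diagram.
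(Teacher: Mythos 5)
The paper itself does not prove Theorem~\ref{thm1} --- it is quoted from \cite{Sh7} with only the supporting Lemmas~\ref{lem_1} and \ref{lem2} and the commutative square stated --- so I am measuring your plan against the argument those tools are clearly intended to carry. Your skeleton is the right one: Riemann--Hurwitz on $\phi$ gives $\sum(e_\phi(x)-1)=2n-2$; Lemma~\ref{lem2} forces the Weierstrass points to distribute as $3+1+1+1$ over $E[2]$ (the only way to write $6$ as a sum of four positive odd integers), which yields the partition $W^{(1)}\cup W^{(2)}$ and the generic fibers $(2)^{(n-1)/2},(2)^{(n-1)/2},(2)^{(n-1)/2},(2)^{(n-3)/2}$ over $q_1,\dots,q_4$; the missing unit of ramification $2n-2-(2n-3)=1$ is the fifth branch point of Case~I. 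One small slip in your setup: $\phi$ carries the branch points of $\pi_C$ into those of $\pi_E$ (this is $\psi(W)\subseteq E[2]$ read through the square), not the other way around as you wrote.

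The genuine gap is that the engine of the whole classification never appears in your plan: the multiplicativity $e_{\pi_E}(\psi(P))\cdot e_\psi(P)=e_\phi(\pi_C(P))\cdot e_{\pi_C}(P)$ read around the square. It is this identity, combined with the fact (from Lemma~\ref{lem_1}b and Riemann--Hurwitz on $\psi$) that the set of $\le 2$ ramification points of $\psi$ is $w$-stable, that produces the exhaustive list: each ramification point of $\psi$ is either fixed by $w$ (a Weierstrass point, necessarily over $E[2]$, contributing an extra $(2)$ or a $(3)$ inside a $q_i$-fiber) or swapped with the other one (contributing a $(2)^1$ over a new branch point if $\psi(P)\notin E[2]$, or a $(4)^1$ inside a $q_i$-fiber if $\psi(P)\in E[2]$). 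Without this computation your ``enumeration'' is not checkable, and indeed where you do gesture at it you get a case wrong: the dihedral case II(ii) does \emph{not} arise from the two ramification points of $\psi$ sitting over distinct branch points (that configuration, with both points Weierstrass, is killed by the parity in Lemma~\ref{lem2}(2ii), and with the points swapped by $w$ it gives the generic Case~I); it arises when both ramification points of $\psi$ are Weierstrass points in the \emph{same} fiber over the distinguished $2$-torsion point, turning the $(2)^{(n-3)/2}$ fiber into $(2)^{(n-1)/2}$ and absorbing the fifth branch point. So the approach is sound, but the decisive local computation and the resulting case-by-case verification --- which you correctly identify as the delicate part --- are exactly what is missing, and the one instance you attempt is incorrect.
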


\subsubsection{The Case When $n$ is Even}
Let us assume now that $deg(\psi)=n$ is an even number. The following theorem classifies the induced
coverings in this case.

\begin{theorem}\label{thm2}
If $n$ is an even number then the generic case for $\psi: C \lar E$ induce the following three cases for
$\phi: \bP^1 \lar \bP^1$:

\begin{description}
\item[I]    $ \left ( (2)^\frac {n-2} 2  , (2)^\frac {n-2} 2  , (2)^\frac {n-2} 2  , (2)^ \frac {n}
2 , (2) \right ) $
\item[II] $ \left ( (2)^\frac {n-4} 2 , (2)^\frac {n-2} 2 , (2)^\frac {n} 2 , (2)^ \frac {n} 2 ,
(2) \right ) $
\item[III] $ \left ( (2)^\frac {n-6} 2 , (2)^\frac {n} 2 , (2)^\frac {n} 2 , (2)^ \frac {n} 2 , (2)
\right ) $
\end{description}
Each of the above cases has the following degenerations (two of the branch points collapse to one)

\begin{description}
\item[I]
\begin{enumerate}
\item $\left ( (2)^\frac {n} 2  , (2)^\frac {n-2} 2  ,
 (2)^\frac {n-2} 2  , (2)^ \frac {n} 2        \right )
$ \item $\left ( (2)^\frac {n-2} 2 , (2)^\frac {n-2} 2 , (4) (2)^\frac {n-6} 2 , (2)^ \frac {n} 2 \right ) $
\item $\left ( (2)^\frac {n-2} 2 , (2)^\frac {n-2} 2 ,
 (2)^\frac {n-2} 2  , (4) (2)^ \frac {n-4} 2        \right )
$ \item $\left ( (3) (2)^\frac {n-4} 2 , (2)^\frac {n-2} 2 ,
 (2)^\frac {n-2} 2  ,  (2)^ \frac {n} 2        \right )
$
\end{enumerate}
\item[II]
\begin{enumerate}
\item   $\left ( (2)^\frac {n-2} 2  , (2)^\frac {n-2} 2  , (2)^\frac {n} 2  ,  (2)^ \frac {n} 2
\right ) $
\item $\left ( (2)^\frac {n-4} 2 , (2)^\frac {n} 2 , (2)^\frac {n} 2 , (2)^ \frac {n} 2 \right)$
\item $\left ((4) (2)^\frac {n-8} 2, (2)^\frac {n-2} 2 , (2)^\frac {n} 2, (2)^ \frac {n}2\right )$
\item $\left ( (2)^\frac {n-4} 2 , (4) (2)^\frac {n-6} 2 , (2)^\frac {n} 2 , (2)^ \frac {n} 2
\right ) $ \item $\left ( (2)^\frac {n-4} 2 , (2)^\frac {n-2} 2 , (2)^\frac {n-4} 2 , (2)^ \frac {n} 2 \right
) $ \item $\left ((3) (2)^\frac {n-6} 2 , (2)^\frac {n-2} 2 , (4) (2)^\frac {n} 2 , (2)^ \frac {n} 2 \right )
$ \item $\left ( (2)^\frac {n-4} 2 , (3) (2)^\frac {n-4} 2 ,
 (2)^\frac {n} 2  ,  (2)^ \frac {n} 2        \right )
$
\end{enumerate}
\item[III]
\begin{enumerate}
\item    $\left ( (2)^\frac {n-4} 2  , (2)^\frac {n} 2  ,
 (2)^\frac {n} 2  , (4) (2)^ \frac {n} 2        \right )
$ \item $\left ( (2)^\frac {n-6} 2 , (4) (2)^\frac {n-4} 2 ,
 (2)^\frac {n} 2  ,  (2)^ \frac {n} 2        \right )
$ \item $\left ( (2)^\frac {n} 2 , (2)^\frac {n} 2 ,
 (2)^\frac {n} 2  , (4) (2)^ \frac {n-10} 2        \right )
$ \item $\left ( (3) (2)^\frac {n-8} 2 , (2)^\frac {n} 2 ,
 (2)^\frac {n} 2  ,  (2)^ \frac {n} 2        \right )
$
\end{enumerate}
\end{description}
\end{theorem}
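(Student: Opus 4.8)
The plan is to push ramification data across the commutative square $\pi_E\circ\psi=\phi\circ\pi_C$ and then read off the fibre of $\phi$ over each branch point. The single identity that does the work is multiplicativity of ramification indices along a composite: for $P\in C$ with $Q=\psi(P)$, $x=\pi_C(P)$ and $y=\phi(x)=\pi_E(Q)$,
\[
e_{\pi_E}(Q)\,e_\psi(P)=e_\phi(x)\,e_{\pi_C}(P),
\]
where $e_{\pi_C}(P)\in\{1,2\}$ equals $2$ exactly when $P\in W$, and $e_{\pi_E}(Q)\in\{1,2\}$ equals $2$ exactly when $Q\in E[2]$. Since $\deg\phi=n$, Riemann--Hurwitz fixes the total ramification of $\phi$ at $\deg R_\phi=2n-2$, which I would use as a running consistency check. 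In the generic situation $\psi$ is unramified away from its two simple branch points, so $e_\psi(P)=1$ for all other $P$.

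First I would compute the fibre of $\phi$ over each of the four branch points $q_1,\dots,q_4$ of $\pi_E$. Fix $q_i$ and let $Q_i\in E[2]$ be the unique point above it. Since $\psi\circ w=v\circ\psi$ and $-Q_i=Q_i$, the set $\psi^{-1}(Q_i)$ is stable under $w$, and its $w$-fixed points are exactly the Weierstrass points it contains, say $k_i:=\#(\psi^{-1}(Q_i)\cap W)$ of them. The remaining $n-k_i$ points split into $(n-k_i)/2$ pairs $\{P,w(P)\}$, each mapping to a single $x=\pi_C(P)$ upstairs. Plugging $e_{\pi_E}(Q_i)=2$ and $e_\psi\equiv1$ into the identity gives $e_\phi(x)=2$ for each such pair and $e_\phi=1$ at each Weierstrass point, so the fibre over $q_i$ is $(2)^{(n-k_i)/2}$ (with $k_i$ unramified points). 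Now Lemma~\ref{lem2} supplies the constraints $\sum_{i=1}^4 k_i=6$ with every $k_i$ even; up to order the only solutions are $(2,2,2,0)$, $(4,2,0,0)$, $(6,0,0,0)$, and substituting these reproduces the four leading entries of Cases I, II, and III respectively.

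It remains to account for the fifth branch point, which comes from the ramification of $\psi$. Generically the branch locus of $\psi$ avoids $E[2]$, so the two simple branch points $P_1,P_2$ are interchanged by $w$ (a fixed, i.e.\ Weierstrass, branch point would be forced to lie over $E[2]$ by Lemma~\ref{lem2}), whence $\psi(P_2)=-\psi(P_1)=-R$ with $R\notin E[2]$. They therefore descend to a single point $y_0=\pi_E(R)\notin\{q_1,\dots,q_4\}$, and $\pi_C(P_1)=\pi_C(P_2)=:x_0$. The identity with $e_{\pi_E}(R)=1$ and $e_\psi(P_1)=2$ forces $e_\phi(x_0)=2$, while every other point over $y_0$ pairs off to an unramified point; thus the fibre over $y_0$ is $(2)$. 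Appending this to the three tuples of the previous paragraph gives Cases I, II, III exactly, each passing the $\deg R_\phi=2n-2$ check.

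Finally, the degenerations arise precisely when $y_0$ collides with some $q_i$, equivalently when $\psi$ leaves its generic stratum: two index-$2$ points interchanged by $w$ over a single $Q_i$, or one index-$3$ Weierstrass point over a $Q_i$. I would treat each collision through local monodromy: the transposition carried by $y_0$ multiplies into the product of $(n-k_i)/2$ disjoint transpositions carried by $q_i$, and the cycle type of the product depends on how the two merging sheets meet the $k_i$ unramified sheets---two unramified sheets produce an extra $2$-cycle, one unramified sheet produces a $3$-cycle (the $(3)$ entries, matching the index-$3$ degeneration of $\psi$), and bridging two of the transpositions produces a $4$-cycle (the $(4)$ entries, matching the two-point degeneration of $\psi$). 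Enumerating over which $q_i$ is hit (hence which $k_i$) together with the realizable merge types yields the degeneration lists. \textbf{The main obstacle} is exactly this last step: deciding which merge types are geometrically realizable for a given $k_i$---this is governed by the number of available unramified sheets---and confirming every resulting tuple against $\deg R_\phi=2n-2$, where the parity and sheet-counting constraints must be tracked with care.
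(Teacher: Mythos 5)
The paper states Theorem~\ref{thm2} without any proof (as with Lemma~\ref{lem_1} and Lemma~\ref{lem2}, the argument is omitted), so there is no in-text proof to compare yours against; what you have written is a genuine proof sketch where the paper has none, and its core is sound. Your derivation of the three generic cases is complete and correct: the multiplicativity identity $e_{\pi_E}(Q)e_\psi(P)=e_\phi(x)e_{\pi_C}(P)$ in the commutative square, the observation that $\psi^{-1}(Q_i)$ is $w$-stable with exactly $k_i$ fixed points, and the parity constraint $\sum k_i=6$ with each $k_i$ even from Lemma~\ref{lem2}(3) pin down the partitions $(2,2,2,0)$, $(4,2,0,0)$, $(6,0,0,0)$ and hence Cases I--III, including the fifth branch point $(2)$ coming from the $w$-swapped pair $P_1,P_2$. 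Your treatment of the degenerations is the right mechanism but is only a plan: the three merge types you describe correspond exactly to the three degenerate configurations of $\psi$ over a $2$-torsion point (both ramification points Weierstrass in one fiber, giving an extra $2$-cycle; a single index-$3$ Weierstrass point, giving a $(3)$; a $w$-swapped ramified pair in one fiber, giving a $(4)$), and the realizability constraint is precisely $k_i\ge 2$ for the first and third types of entry versus $(n-k_i)/2\ge 2$ for the $(4)$-type, which is what produces $4$, $7$, $4$ degenerations in Cases I, II, III. Carrying that enumeration out is mechanical but you have not done it, so strictly the proof is unfinished at the step you yourself flag.

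One substantive payoff of your approach: the degree-$n$ and $\deg R_\phi=2n-2$ checks you propose would reveal that several tuples in the printed statement are corrupted. In II.5 the third entry should be $(4)(2)^{\frac{n-4}{2}}$ (the $(4)$ is missing, and as printed the total ramification is $2n-5$); in II.6 and III.1 the factor $(4)$ attached to a $(2)^{\frac n 2}$ entry is spurious (such a fiber would have degree $n+4$). Your method, pushed through, produces the corrected tuples, which is a point in its favor rather than a defect.
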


\subsection{Maximal coverings $\psi:C \lar E$.}

Let $\psi_1:C \lar E_1$ be a covering of degree $n$ from a curve of genus 2 to an elliptic curve. The
covering $\psi_1:C \lar E_1$ is called a \textbf{maximal covering} if it does not factor through a nontrivial
isogeny. A map of algebraic curves $f: X \to Y$ induces maps between their Jacobians $f^*: J_Y \to J_X$ and
$f_*: J_X \to J_Y$. When $f$ is maximal then $f^*$ is injective and $ker (f_*)$ is connected, see \cite{Sh7}
for details.

Let $\psi_1:C \lar E_1$ be a covering as above which is maximal. Then ${\psi^*}_1: E_1 \to J_C$ is injective
and the kernel of $\psi_{1,*}: J_C \to E_1$ is an elliptic curve which we denote by $E_2$; see \cite{FK} or
\cite{Ku}. For a fixed Weierstrass point $P \in C$, we can embed $C$ to its Jacobian via
\begin{equation}
\begin{split}
i_P: C & \lar J_C \\
 x & \to [(x)-(P)]
\end{split}
\end{equation}
Let $g: E_2 \to J_C$ be the natural embedding of $E_2$ in $J_C$, then there exists $g_*: J_C \to E_2$. Define
$\psi_2=g_*\circ i_P: C \to E_2$. So we have the following exact sequence
$$ 0 \to E_2 \buildrel{g}\over\lar J_C \buildrel{\psi_{1,*}}\over\lar E_1 \to 0 $$
The dual sequence is also exact
$$ 0 \to E_1 \buildrel{\psi_1^*}\over\lar J_C \buildrel{g_*}\over\lar E_2 \to 0 $$
If $deg (\psi_1)$ is an odd number then the maximal covering $\psi_2: C \to E_2$ is unique (up to isomorphism
of elliptic curves), see Kuhn \cite{Ku}. If the cover $\psi_1:C \lar E_1$ is given, and therefore $\phi_1$,
we want to determine $\psi_2:C \lar E_2$ and $\phi_2$. The study of the relation between the ramification
structures of $\phi_1$ and $\phi_2$ provides information in this direction. The following lemma (see
\cite[pg. 160]{FK}) answers this question for the set of Weierstrass points $W=\{P_1, \dots , P_6\}$ of C
when the degree of the cover is odd.

\begin{lemma} Let $\psi_1:C \lar E_1$, be maximal of  degree $n$.
Then, the map $\psi_2: C\to E_2$ is a maximal covering of degree $n$. Moreover,
\begin{enumerate}
\item [i) ] if  $n$ is  odd and ${\cO}_i\in E_i[2]$, $i=1, 2$  are  the places such that $\#
(\psi_i^{-1}({\cO }_i)\cap W) = 3$, then $\psi_1^{-1}({\cO }_1)\cap W$ and $\psi_2^{-1}({\cO }_2)\cap W$ form
a disjoint union of $W$.
\item [ii)] if $n$ is even and $Q\in E[2]$, then $\# \left( \psi^{-1}(Q)\right) = 0$ or 2.
\end{enumerate}
\end{lemma}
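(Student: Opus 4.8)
The plan is to treat the three assertions separately, putting almost all the weight on part (i). First, the maximality and degree of $\psi_2$. Since $\psi_1$ is maximal, $\psi_1^*$ is injective and $\ker\psi_{1,*}$ is connected, so $E_2=\ker\psi_{1,*}$ is an elliptic curve and the two exact sequences hold. The map $\psi_2=g_*\circ i_P$ is the minimal covering attached to the complementary subvariety $E_2\subset J_C$; that it is maximal of the same degree $n$ is the statement that complementary abelian subvarieties of a principally polarized surface carry induced polarizations of the same exponent, which I would quote from \cite{FK} and \cite{Ku}: restricting the principal polarization of $J_C$ to $\psi_1^*(E_1)$ gives a polarization of exponent $\deg\psi_1=n$, and Kani's theory transports this exponent to the complement, whence $\deg\psi_2=n$. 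The common structural input is the $2$-torsion of $J_C$. Each Weierstrass point gives a $2$-torsion class $i_P(P_j)=[(P_j)-(P)]$ since $2(P_j)\sim2(P)$ is canonical; identifying $J_C[2]$ with even subsets of $W$ modulo complementation, $i_P(P_j)$ is the pair $\{P_j,P\}$, the Weil pairing is $\langle S,T\rangle=\#(S\cap T)\bmod2$, and $\sum_j i_P(P_j)=0$. Using the adjunction $\langle\psi_1^*x,y\rangle_{J_C}=\langle x,\psi_{1,*}y\rangle_{E_1}$ together with $\psi_{1,*}\psi_1^*=[n]$ and $\psi_{1,*}g=0$, for $n$ odd one gets that $\psi_1^*$ preserves the pairing on $2$-torsion, that $\psi_1^*(E_1[2])$ and $g(E_2[2])$ are orthogonal, and — since $a\in E_1[2]$ with $[n]a=0$ forces $a=0$ — that they meet only in $0$. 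Hence $J_C[2]=\psi_1^*(E_1[2])\perp g(E_2[2])$ is an orthogonal sum of two nondegenerate symplectic planes, with projections exactly $\psi_{1,*}$ and $g_*$.

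For part (i), Lemma~\ref{lem2} forces the six Weierstrass points into fibers of each $\psi_i$ over $E_i[2]$ with odd multiplicities, so the distribution is $3+1+1+1$; thus $A=\psi_1^{-1}(\cO_1)\cap W$ and $B=\psi_2^{-1}(\cO_2)\cap W$ each have three elements, and it suffices to prove $A\cap B=\emptyset$. If $P_j,P_{j'}\in A\cap B$ then $i_P(P_j)-i_P(P_{j'})\in\psi_1^*(E_1[2])\cap g(E_2[2])=0$, so $\#(A\cap B)\le1$. Suppose it equals one; then a single Weierstrass point $z$ lies outside $A\cup B$, and since $A,B$ are intrinsic I would take the free base point to be $P=z$. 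Writing $v_j=i_P(P_j)=(\pi_1v_j,\pi_2v_j)$ in the orthogonal splitting, the three members of $A$ share first coordinate $\tau\neq0$ and the three of $B$ share second coordinate $\sigma\neq0$. The relation $\langle v_i,v_j\rangle=1$ for distinct $i,j\in W\setminus\{z\}$ forces, inside $A$, that the three second coordinates are pairwise paired to $1$ and hence exhaust the nonzero classes of $E_2[2]$; symmetrically the first coordinates of $B$ exhaust the nonzero classes of $E_1[2]$. Pairing a point of $A$ against a point of $B$ then yields $\langle\tau,\cdot\rangle+\langle\cdot,\sigma\rangle=1+1=0\neq1$, a contradiction. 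Therefore $A\cap B=\emptyset$, and $A\sqcup B=W$.

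For part (ii), with $n$ even Lemma~\ref{lem2} gives $\#(\psi^{-1}(Q)\cap W)$ even, so I only need to rule out four or more. If four Weierstrass points $P_1,\dots,P_4$ lay over a single $Q\in E[2]$, then each difference $[(P_a)-(P_b)]$ is annihilated by $\psi_*$, hence lies in $(\ker\psi_*)[2]=E_2[2]\iso(\bZ/2)^2$; but in the pair model the three classes $\{P_1,P_2\},\{P_1,P_3\},\{P_1,P_4\}$ are $\F_2$-independent and generate a rank-$3$ subgroup, which cannot embed in $(\bZ/2)^2$. Thus at most three Weierstrass points lie over $Q$, and by parity the count is $0$ or $2$.

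The hard part is excluding the single common point in (i): the numerical relation $\sum_j v_j=0$ is by itself inconclusive, and one genuinely needs the symplectic structure — the orthogonality of the two planes and the explicit form $\langle S,T\rangle=\#(S\cap T)\bmod2$ — to force the two coordinate sets to become complementary and to produce the $1+1=0$ contradiction. Establishing the Weil-pairing compatibility of $\psi_1^*$, and with it the orthogonal splitting of $J_C[2]$, is therefore the technical core on which the whole argument rests.
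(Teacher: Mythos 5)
Your argument is correct, but there is nothing in the paper to measure it against: the lemma is stated without proof and simply attributed to Frey--Kani \cite[pg.~160]{FK}, so what you have produced is in effect a reconstruction of the missing argument (and it is essentially the Frey--Kani argument) rather than an alternative to one given here. The essential steps all check out: the identification of $J_C[2]$ with even subsets of $W$ modulo complementation, with Weil pairing $\langle S,T\rangle=\#(S\cap T)\bmod 2$ and $i_P(P_j)\leftrightarrow\{P_j,P\}$; the orthogonal splitting $J_C[2]=\psi_1^*(E_1[2])\perp g(E_2[2])$ for odd $n$, using $\psi_{1,*}\psi_1^*=[n]$, $\psi_{1,*}\circ g=0$ and oddness both to force trivial intersection and to identify $\psi_{1,*}$ and $g_*$ with the two projections; the bound $\#(A\cap B)\le 1$ coming from $\ker\psi_{1,*}[2]\cap\ker g_*[2]=0$ together with the fact that $[(P_j)-(P_{j'})]=0$ forces $P_j=P_{j'}$ on a curve of positive genus; and the exclusion of $\#(A\cap B)=1$ by rebasing at the sixth point $z$, noting that the second coordinates of the three points of $A$ (resp.\ the first coordinates of the three points of $B$) pairwise pair to $1$ and hence exhaust the nonzero classes of $E_2[2]$ (resp.\ $E_1[2]$), so that pairing a point of $A\setminus B$ against a point of $B\setminus A$ gives $1+1=0$ instead of the required $1$. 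Part (ii) is also fine once the statement is read as $\#(\psi^{-1}(Q)\cap W)$, which is clearly what is intended: four Weierstrass points in one fiber would generate a rank-$3$ subgroup of $(\ker\psi_*)[2]\iso(\bZ/2)^2$. Two points worth making explicit in a write-up: (a) before applying Lemma~\ref{lem2} to $\psi_2$ you need $\psi_2(W)\subset E_2[2]$, which holds because the base point $P$ of $i_P$ is a Weierstrass point, so $i_P\circ w=-i_P$ and hence $\psi_2\circ w=-\psi_2$; and (b) the nondegeneracy of the pairing restricted to $g(E_2[2])$, and the claim that $g_*$ is the second projection, both rest on $g_*\circ g=[n]$, i.e.\ on the fact from the complementary-subvariety theory of \cite{FK} and \cite{Ku} (which you correctly flag as imported) that the induced polarization on $E_2$ has the same exponent $n$.
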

The above lemma says that if $\psi$ is maximal of even degree then the corresponding induced covering can
have only type \textbf{I} ramification, see theorem \ref{thm2}.

\section{The locus of genus two curves with $(n, n)$ split Jacobians}

In this section we will discuss the Hurwitz spaces of coverings with ramification as in the previous section and
the Humbert spaces of discriminant $n^2$.

\subsection{Hurwitz spaces of covers $\phi : \bP^1 \to \bP^1$}

Two covers $f:X\to \bP^1$ and $f':X'\to \bP^1$ are called \textbf{weakly equivalent} if there is a
homeomorphism $h:X\to X'$ and an analytic automorphism $g$ of $\bP^1$ (i.e., a Moebius transformation) such
that  $g\circ f=f'\circ h$. The covers $f$ and $f^\prime$ are called \textbf{equivalent } if the above holds
with $g=1$.

Consider a cover $f:X \to \bP^1$ of degree $n$, with branch points $p_1,...,p_r\in \bP^1$. Pick $p\in \bP^1
\setminus\{p_1,...,p_r\}$, and choose loops $\gamma_i$ around $p_i$ such that $\gamma_1,...,\gamma_r$ is a
standard generating system of the fundamental group $\Gamma:=\pi_1( \bP^1 \setminus\{p_1,...,p_r\},p)$, in
particular, we have $\gamma_1 \cdots \gamma_r=1$. Such a system $\gamma_1,...,\gamma_r$ is called a homotopy basis
of $\bP^1 \setminus\{p_1,...,p_r\}$. The group $\Gamma$ acts on the fiber $f^{-1}(p)$ by path lifting, inducing a
transitive subgroup $G$ of the symmetric group $S_n$ (determined by $f$ up to conjugacy in $S_n$). It is called
the \textbf{monodromy group} of $f$. The images of $\gamma_1,...,\gamma_r$ in $S_n$ form a tuple of permutations
$\s=(\s_1,...,\s_r)$ called a tuple of \textbf{branch cycles} of $f$.

We say a cover $f:X\to\bP^1$ of degree $n$ is of type $\s$ if it has $\s$ as tuple of branch cycles relative to
some homotopy basis of $\bP^1$ minus the branch points of $f$. Let $\Hs$ be the set of weak equivalence classes of
covers of type $\s$.  The \textbf{Hurwitz space} $\Hs$ carries a natural structure of an quasiprojective variety.

We have $\Hs=\H_\tau$ if and only if the tuples $\s$, $\tau$ are in the same \textbf{braid orbit} $\mathcal
O_\tau = \mathcal O_\sigma$. In the case of the covers $\phi : \bP^1 \to \bP^1$ from above, the corresponding
braid orbit consists of all tuples in $S_n$ whose cycle type matches the ramification structure of $\phi$.

This and   the genus of $\Hs$ in the degenerate cases  (see the following table) has been computed in GAP  by the
BRAID PACKAGE written by K. Magaard.

\begin{table}[!ht]
\begin{tabular}{c|c|c|c|c|c|c }
deg& Case &   cycle type of $\s$ & $\# ( \mathcal O_\s )$     & $G$ & $\dim \Hs$ & genus of $\Hs$ \\
\hline & &      && &   &   \\
3 & & $(2^2, 2^2, 2^2, 2, 2) $  &  40   & $S_3 $   & 2 & -- \\
 & 1 & $(2^2, 2^2, 4, 2) $    &  8   &$S_5 $   & 1  & 0 \\
 & 2 & $(2^2, 2^2, 2\cdot 3, 2)$& 6  & $S_5$    & 1  & 0 \\
 & 3 & $(2^2, 2^2, 2^2, 3)$  &  9  &  $A_5$    & 1  & 1 \\
 & 4     &                       &     &           &    &   \\
  & &      & &    & & \\
5 & & $(2^2, 2^2, 2^2, 2, 2) $  &  40   & $S_5 $   & 2 & -- \\
 & 1 & $(2^2, 2^2, 4, 2) $    &  8   &$S_5 $   & 1  & 0 \\
 & 2 & $(2^2, 2^2, 2\cdot 3, 2)$& 6  & $S_5$    & 1  & 0 \\
 & 3 & $(2^2, 2^2, 2^2, 3)$  &  9  &  $A_5$    & 1  & 1 \\
 & 4 &      & &    & & \\
  &  &      & &    & & \\
7 & & $(2^2, 2^2, 2^2, 2, 2) $  &  168   & $S_7 $   & 2 & -- \\
\end{tabular}
%
\end{table}

\subsection{Humbert surfaces}
Let $\A_2$ denote the moduli space of principally polarized abelian surfaces. It is well known that $\A_2$ is
the quotient of the Siegel upper half space $\mathfrak H_2$ of symmetric complex $2 \times 2$ matrices with
positive definite  imaginary part by the action of the symplectic group $Sp_4 (\Z)$; see \cite[p. 211]{G}.

Let $\D$ be a fixed positive integer and  $N_\D$ be the set of matrices
$$\tau =
\begin{pmatrix}z_1 & z_2\\
z_2 & z_3
\end{pmatrix}
\in \mathfrak H_2$$ such that there exist nonzero integers $a, b, c, d, e $ with the following properties:
\begin{equation}\label{humb}
\begin{split}
& a z_1 + bz_2 + c z_3 + d( z_2^2 - z_1 z_3) + e = 0\\
& \D= b^2 - 4ac - 4de\\
\end{split}
\end{equation}

The \emph{  Humbert surface}  $\H_\D$  of discriminant $\D$   is called the image of $N_\D$ under the
canonical map
$$\mathfrak H_2 \to \A_2:= Sp_4( \Z)\setminus{\mathfrak H}_2,$$
see \cite{Hu, BW, Mu} for details.  It is known that $\H_\D \neq \emptyset$ if and only if $\D
> 0$ and $\Delta \equiv 0 \textit { or } 1 \mod 4$. Humbert (1900) studied the zero loci in
Eq.~\eqref{humb} and discovered certain relations between points in these spaces and certain plane
configurations of six lines; see \cite{Hu} for more details.

For a genus 2 curve $C$ defined over $\bC$, $[C]$ belongs too $\L_n$ if and only if the isomorphism class $[J_C]
\in \A_2$ of its (principally polarized) Jacobian $J_C$ belongs to the Humbert surface $\H_{n^2}$, viewed as a
subset of the moduli space $\A_2$ of principally polarized abelian surfaces; see \cite[Theorem 1, p. 125]{Mu}  for
the proof of this statement. In \cite{Mu} is shown that there is a one to one correspondence between the points in
$\L_n$ and points in $\H_{n^2}$. Thus, we have the map:
\begin{equation}
\begin{split}
&  \H_\s \, \,  \longrightarrow \, \, \L_n  \, \, \longrightarrow  \, \, \H_{n^2}\\
([f], (p_1, & \dots , p_r)  \to [\X] \to [J_\X]\\
\end{split}
\end{equation}
In particular, every point in $\H_{n^2}$ can be represented by an element of $\mathfrak H_2$ of the form
$$\tau =
\begin{pmatrix}z_1 & \frac 1 n \\
\frac 1 n & z_2
\end{pmatrix}, \qquad z_1, \, z_2 \in \mathfrak H.
$$
There have been many attempts to explicitly describe these Humbert surfaces. For some small discriminant this
has been done by several authors; see \cite{SV1}, \cite{Sh1}, \cite{Ku}. Geometric characterizations of such
spaces for $\D= 4, 8, 9$, and 12 were given by Humbert (1900) in \cite{Hu} and for $\D= 13, 16, 17, 20$, 21
by Birkenhake/Wilhelm (2003) in \cite{BW}.

\section{Genus 2 curves with  degree 2 elliptic subcovers}

An \textbf{elliptic involution} of $K$ is  an  involution in $G$ which is different from $z_0$ (the
hyperelliptic involution). Thus the elliptic involutions of $G$ are in 1-1 correspondence with the elliptic
subfields of $K$ of degree 2 (by the Riemann-Hurwitz formula).

If $z_1$ is an elliptic involution and $z_0$ the hyperelliptic one, then $z_2:=z_0\, z_1$ is another elliptic
involution. So the elliptic involutions come naturally in pairs. This pairs also the elliptic subfields of
$K$ of degree 2. Two such subfields $E_1$ and $E_2$ are paired if and only if $E_1\cap k(X)=E_2\cap k(X)$.
$E_1$ and $E_2$ are $G$-conjugate unless $G\iso D_6$ or $G\iso V_4$ (This can be checked from
Lemma~\ref{groups}).

\begin{theorem} \label{mainthm_kap3}
Let $K$ be a genus 2 field and $e_2(K)$ the number of $Aut(K)$-classes of elliptic subfields of $K$ of degree
2. Suppose $e_2(K) \geq 1$. Then the classical invariants of $K$ satisfy the equation,
\begin{scriptsize}
\begin{equation}
\begin{split}\label{eq_L2_J}
-J_2^7 J_4^4+8748J_{10}J_2^4J_6^2507384000J_{10}^2J_4^2J_2-19245600J_{10}^2J_4J_2^3
-592272J_{10}J_4^4J_2^2 \\
-81J_2^3J_6^4-3499200J_{10}J_2J_6^3+4743360J_{10}J_4^3J_2J_6-870912J_{10}J_4^2J_2^3J_6\\
+1332J_2^4J_4^4J_6-125971200000J_{10}^3 +384J_4^6J_6+41472J_{10}J_4^5+159J_4^6J_2^3\\
-47952J_2J_4J_6^4+104976000J_{10}^2J_2^2J_6-1728J_4^5J_2^2J_6+6048J_4^4J_2J_6^2+108J_2^4J_4J_6^3\\
+12J_2^6J_4^3J_6+29376J_2^2J_4^2J_6^3-8910J_2^3J_4^3J_6^2-2099520000J_{10}^2J_4J_6-236196J_{10}^2J_2^5\\
+31104J_6^5-6912J_4^3J_6^34 +972J_{10}J_2^6J_4^2 +77436J_{10}J_4^3J_2^4-78J_2^5J_4^5\\
+3090960J_{10}J_4J_2^2J_6^2-5832J_{10}J_2^5J_4J_6-80J_4^7J_2-54J_2^5J_4^2J_6^2-9331200J_{10}J_4^2J_6^2 = 0&\\
\end{split}
\end{equation}
\end{scriptsize}

\noindent Further, $e_2(K)=2$ unless $K=k(X,Y)$ with $$ Y^2=X^5-X  $$ in which case $e_2(K)=1$.
\end{theorem}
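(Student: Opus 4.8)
The plan is to prove the two assertions separately: first the polynomial relation \eqref{eq_L2_J} among the Igusa invariants, then the count of $\Aut(K)$-classes of elliptic subfields.

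For the equation, the starting point is that $e_2(K)\geq 1$ produces an elliptic involution $z_1\in G$, whence $\langle z_0,z_1\rangle \iso V_4 \subseteq G$. Choosing a coordinate on $\bP^1$ in which the involution induced by $z_1$ acts by $X\mapsto -X$ makes the branch sextic even, and the standard normalization then puts $K$ in Jacobi's form $Y^2 = X^6 - s_1 X^4 + s_2 X^2 - 1$ from the introduction. I would next compute the Igusa invariants $J_2,J_4,J_6,J_{10}$ of this sextic as explicit polynomials in $s_1,s_2$. Because isomorphic presentations of the curve are permuted by the Galois action of the covering $(s_1,s_2)\mapsto (u,v)=(s_1 s_2,\, s_1^3+s_2^3)$ --- the swap $X\mapsto 1/X$ interchanging $s_1,s_2$ and the scaling $X\mapsto \mu X$ rescaling them by cube roots of unity --- each $J_{2i}$ is in fact a polynomial in $u$ and $v$. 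This exhibits $\L_2$ as the image of the map $(u,v)\mapsto [J_2:J_4:J_6:J_{10}]$, a surface in weighted projective space and hence the zero locus of a single weighted-homogeneous equation.

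The final step is to eliminate $u$ and $v$ from the relations $J_{2i}=J_{2i}(u,v)$, which I would do by a resultant or Gr\"obner-basis computation and then clear denominators to land in $\bZ[J_2,J_4,J_6,J_{10}]$; the result is \eqref{eq_L2_J}, which therefore vanishes identically on the image and so holds whenever $e_2(K)\geq 1$ --- the forward implication of the theorem. A useful consistency check is that, assigning weight $2i$ to $J_{2i}$, every monomial of \eqref{eq_L2_J} has weight $30$, so the relation descends to a well-defined hypersurface on $\M_2$. Controlling this elimination --- keeping the resultant manageable and verifying that the polynomial produced is exactly the reduced equation of the image and not a proper multiple of it --- is the main computational obstacle, and is precisely the step omitted in \cite{SV1}.

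For the count, I would argue group-theoretically. By Riemann-Hurwitz every involution of $G$ other than $z_0$ fixes exactly two points and so has elliptic quotient; thus the degree-$2$ elliptic subfields correspond bijectively to the non-hyperelliptic involutions, and $e_2(K)$ equals the number of $\Aut(K)$-conjugacy classes among them. Since $e_2(K)\geq 1$ forces $V_4\subseteq G$, it suffices to run through the groups of Lemma~\ref{groups} that contain $V_4$ in characteristic zero, namely $V_4$, $D_8$, $D_{12}$, $\bZ_3\sem D_8$, and $GL_2(3)$, and to count these classes. For the first four one finds exactly two classes --- trivially for the abelian group $V_4$, and for the dihedral-type groups by splitting the reflections, in accordance with the conjugacy dichotomy stated just before the theorem --- so $e_2(K)=2$. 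The exceptional case is $G\iso GL_2(3)$, where $z_0$ is the central element $-I$ and every other involution is diagonalizable with eigenvalues $\{1,-1\}$, hence conjugate to $\mathrm{diag}(1,-1)$: they form a single class and $e_2(K)=1$. By Lemma~\ref{groups} this group occurs exactly for $Y^2=X(X^4-1)=X^5-X$, which pins down the unique exception. The one delicate verification here is the conjugacy analysis in $\bZ_3\sem D_8$, where the extension structure must be worked out to confirm two classes rather than an accidental fusion into one.
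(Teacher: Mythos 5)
Your proposal matches the paper's proof in all essentials: the paper likewise reduces to Jacobi's even sextic $Y^2=X^6-s_1X^4+s_2X^2-1$ via an elliptic involution, passes to the dihedral invariants $u=s_1s_2$, $v=s_1^3+s_2^3$ (under exactly the $D_6$-action you describe), writes $J_2,J_4,J_6,J_{10}$ as polynomials in $(u,v)$, and obtains \eqref{eq_L2_J} by elimination --- the only cosmetic difference being that the paper eliminates through the absolute invariants $i_1,i_2,i_3$ and then multiplies by $J_2^5$, checking the $J_2=0$ case by direct substitution. Your group-theoretic count of conjugacy classes of non-hyperelliptic involutions (two classes for $V_4$, $D_8$, $D_{12}$, $\bZ_3\sem D_8$; one class in $GL_2(3)$, realized only by $Y^2=X^5-X$) is precisely the argument the paper compresses into its one-line appeal to the classification of automorphism groups and the conjugacy discussion preceding the theorem.
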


\begin{proof} Since $e_2(K)$ is the number of conjugacy classes of elliptic involutions in $G$ the claim about $e_2(K)$
follows from theorem \ref{lem1}. For the proof of the following lemma see \cite{SV1}.

\begin{lemma} \label{lem1}
Suppose $z_1$ is an elliptic involution of $K$. Let $z_2=z_1z_0$, where $z_0$ is the hyperelliptic
involution. Let $E_i$ be the fixed field of $z_i$ for $i=1,2$. Then $K=k(X,Y)$ where
\begin{equation}
Y^2=X^6-s_1X^4+s_2X^2-1
\end{equation}
and $27-18s_1s_2-s_1^2s_2^2+4s_1^3+4s_2^3\neq 0$. Further $E_1$ and $E_2$ are the subfields $k(X^2,Y)$ and
$k(X^2, YX)$.
\end{lemma}

We need to determine to what extent the normalization in the above proof determines the coordinate $X$. The
condition $z_1(X)=-X$ determines the coordinate $X$ up to a coordinate change by some $\g\in \Gamma$
centralizing $z_1$. Such $\g$ satisfies $\g(X)=mX$ or $\g(X) = \frac m X$, $m\in k\setminus \{0\}$. The
additional condition $abc=1$ forces $1=-\g(\a_1) \dots \g(a_6)$, hence $m^6=1$. So $X$ is determined up to a
coordinate change by the subgroup $H\iso D_6$ of $\Gamma$ generated by $\tau_1: X\to \e_6X$, $\tau_2: X\to
\frac 1 X$, where $\e_6$ is a primitive 6-th root of unity. Let $\e_3:=\e_6^2$. The coordinate change by
$\tau_1$ replaces $s_1$ by $\e_3s_2$ and $s_2$ by $\e_3^2 s_2$. The coordinate change by $\tau_2$ switches
$s_1$ and $s_2$. Invariants of this $H$-action are:
\begin{equation} u:=s_1 s_2, \quad v:=s_1^3+s_2^3 \end{equation}

\begin{remark}\label{remark1}
Such invariants were quite important in simplifying  computations for the locus $\L_2$. Later they have been used
by Duursma and Kiyavash to  show that genus 2 curves with extra involutions are suitable for the vector
decomposition problem; see \cite{Du} for details. In this volume they are used again, see the paper by Cardona and
Quer. They were later generalized to higher genus hyperelliptic curves and were called dihedral invariants; see
\cite{GSh}.
\end{remark}

Classical invariants of the field $K$ given by lemma \ref{lem1} are:
\begin{equation}
\begin{split} \label{J_eq}
J_2 & =  240+16u\\
J_4 & =  48v +4u^2+1620-504u \\
J_6 & =  -20664u +96v -424u^2+24u^3+160uv+119880\\
J_{10} & = 64(27-18u-u^2+4v)^2
\end{split}
\end{equation}
For $J_2\neq 0$ we express the absolute invariants $i_1, i_2, i_3$ in terms of $u$ and $v$. We can eliminate
$u$ and $v$ and get the following equation of $\L_2$.
\begin{scriptsize}
\begin{equation}
\begin{split}
-27i_1^6+9i_1^7+161243136i_3i_1^3-12441600i_3i_2^3+2i_2^5+107495424i_3i_1^2i_2+54i_1^3i_2^2\\
-52254720i_3i_1i_2^2-47278080i_3i_1^3i_2-8294400i_3i_1^2i_2^2-9459597312000i_3^2i_1^2-18i_1^4i_2^2\\
-240734712102912i_3^2+111451255603200i_3^2i_1+20639121408000i_3^2i_2-55240704i_3i_1^4\\
+2i_1^6i_2-4i_1^3i_2^3+331776i_3i_1^5-27i_2^4-2866544640000i_3^2i_1i_2+161243136i_3i_2^2+9i_1i_2^4\\
-264180754022400000i_3^3  = 0 & \\
\end{split}
\end{equation}
\end{scriptsize}

To get rid of the condition $J_2\neq 0$ we multiply by $J_2^5$ to get the ``projective'' equation \eqref{eq_L2_J}
of $\L_2$. This holds indeed for all $K\in \L_2$, as can be checked by substituting from \eqref{J_eq}. This
completes the proof of Theorem~\ref{mainthm_kap3}. \\

\end{proof}

The following proposition determines the group $G$ in terms of $u$ and $v$.

\begin{proposition} \label{u_v}
Let $\C$ be a genus 2 curve such that $G:=Aut(\C)$ has an elliptic involution and $J_2\neq 0$. Then,

a) $G\iso \bZ_3 \sem D_4$ if and only if $(u,v)=(0,0)$ or $(u,v)=(225, 6750)$.

b) $G\iso W_1$ if and only if $u=25$ and $v=-250$.

c) $G\iso D_6$ if and only if $4v-u^2+110u-1125=0$, for $u\neq 9, 70 + 30\sqrt{5}, 25$.

Moreover, the classical invariants satisfy the equations,
\begin{scriptsize}
\begin{equation}
\begin{split}
-J_4J_2^4+12J_2^3J_6-52J_4^2J_2^2+80J_4^3+960J_2J_4J_6-3600J_6^2 &=0\\
864J_{10}J_2^5+3456000J_{10}J_4^2J_2-43200J_{10}J_4J_2^3-2332800000J_{10}^2-J_4^2J_2^6\\
-768J_4^4J_2^2+48J_4^3J_2^4+4096J_4^5 &=0\\
\end{split}
\end{equation}
\end{scriptsize}

d) $G\iso D_4$ if and only if $v^2-4u^3=0$, for $u \neq 1,9, 0, 25, 225$. Cases $u=0,225$ and $u=25$ are
reduced to cases a),and b) respectively. Moreover, the classical invariants satisfy \eqref{eq_L2_J} and the
following equation,
\begin{small}
\begin{equation}
\begin{split}
1706J_4^2J_2^2+2560J_4^3+27J_4J_2^4-81J_2^3J_6-14880J_2J_4J_ 6+28800J_6^2 &=0
\end{split}
\end{equation}
\end{small}
\end{proposition}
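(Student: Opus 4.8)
The plan is to read off $G=\Aut(\C)$ from its reduced automorphism group $\bar G = G/\langle z_0\rangle \le \mathrm{PGL}_2(k)$, which acts on $\bP^1$ by permuting the six roots of the sextic. By Lemma~\ref{lem1} every such $\C$ is a fiber of the normal form $Y^2 = X^6 - s_1X^4 + s_2X^2 - 1$, on which the hyperelliptic involution $z_0$ and the elliptic involution $z_1\colon X\mapsto -X$ already generate $V_4$; hence $\bar G \supseteq \langle\sigma\rangle \iso \bZ_2$ with $\sigma\colon X\mapsto -X$. Since $\C$ carries an elliptic involution we have $V_4\subseteq G$, and Lemma~\ref{groups} (with $\mathrm{char}\,k=0$ excluding $2^+S_5$) limits the proper over-groups of $V_4$ to $D_8, D_{12}, \bZ_3\sem D_8$, and $GL_2(3)$ --- that is, $D_4, D_6, \bZ_3\sem D_4, W_1$ in the notation of the statement. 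Each larger $G$ is detected by an extra M\"obius transformation preserving the root set; I would write down its defining polynomial condition on $(s_1,s_2)$ and then symmetrize under the group $H\iso D_6$ of Lemma~\ref{lem1} to obtain a relation purely in $u=s_1s_2$ and $v=s_1^3+s_2^3$.

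For part (d) the natural extra involution is $\tau_2\colon X\mapsto 1/X$, which (together with the compensating action on $Y$) carries the normal form with $(s_1,s_2)$ to the one with $(s_2,s_1)$; it is an automorphism exactly when $s_1=s_2$, and after the $H$-action exactly when $s_1^3=s_2^3$. As $(s_1^3-s_2^3)^2 = v^2-4u^3$, this is precisely $v^2-4u^3=0$, and there $\langle\sigma,\tau_2\rangle\iso V_4$ forces $G\supseteq D_4$. For part (c) I would instead seek an order-three $\rho\in\mathrm{PGL}_2(k)$ permuting the six roots with $\langle\sigma,\rho\rangle\iso S_3$; equivalently $\C$ is $\mathrm{PGL}_2$-equivalent (respecting $z_1$) to $Y^2=X^6+cX^3+1$ admitting $X\mapsto\zeta_3 X$. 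Imposing this extra symmetry on the even cubic $t^3-s_1t^2+s_2t-1$ (with $t=X^2$) yields a constraint on $s_1,s_2$ that symmetrizes to the line $4v-u^2+110u-1125=0$, where $G\supseteq D_6$. Both loci are one-dimensional in the $(u,v)$-plane, matching the fact that each of (c), (d) is cut out in $\M_2$ by $\eqref{eq_L2_J}$ together with exactly one further equation.

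Parts (a) and (b) single out the points of maximal $\bar G$. The curve $Y^2=X^6-1$ has $G\iso\bZ_3\sem D_8$, and placing it in the normal form through its several elliptic involutions yields exactly $(u,v)=(0,0)$ and $(u,v)=(225,6750)$; likewise $Y^2=X^5-X$ has $G\iso GL_2(3)=W_1$ and gives $(u,v)=(25,-250)$. I would check these lie on $v^2-4u^3=0$ and that no further extra automorphism appears off them, so they exhaust $\bar G\supseteq A_4$ (resp.\ $S_4$). Being intersections of the $D_4$ and $D_6$ loci with the strata of larger groups, these points also account for the exclusions: on the $D_6$ line the discriminant $27-18u-u^2+4v$ reduces to $128(9-u)$, so $u=9$ gives a singular (reducible-sextic) curve, while $u=25$ and the remaining flagged values are where the group jumps to $GL_2(3)$ or $\bZ_3\sem D_8$; all must be removed so that $G$ is exactly $D_6$ (resp.\ $D_4$) on the open complement.

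Finally, to obtain the displayed relations among the classical invariants I would substitute the $(u,v)$-conditions into the parametrization $\eqref{J_eq}$ of $J_2,J_4,J_6,J_{10}$ and eliminate $u,v$ by resultants (or a Gr\"obner basis), then confirm by back-substitution that each displayed equation holds identically on its stratum --- exactly the bookkeeping used to pass from the affine to the projective form of $\eqref{eq_L2_J}$ in Theorem~\ref{mainthm_kap3}. The main obstacle is twofold: first, the order-three computation behind the $D_6$ locus, where eliminating the coefficients of $\rho$ is delicate and must be arranged so the answer genuinely descends to a relation in $u,v$; and second, the rigorous classification of the exceptional points, which requires tracking all elliptic-involution representatives of the highly symmetric curves and proving $G$ is no larger than claimed elsewhere. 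By comparison the invariant-theoretic elimination, though heavy, is entirely mechanical.
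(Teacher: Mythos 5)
First, a point of reference: the paper itself contains no proof of Proposition~\ref{u_v}; it is imported from \cite{SV1} (the neighboring Proposition~\ref{thm_3} explicitly defers there), so your proposal can only be judged against that method. Your overall strategy is the right one and is essentially the one used there: put the curve in the normal form of Lemma~\ref{lem1}, detect each larger group by an extra M\"obius transformation preserving the six roots, translate the condition into $(s_1,s_2)$, descend to the $H$-invariants $(u,v)$, and eliminate $u,v$ from \eqref{J_eq} by resultants to get the classical-invariant relations. Your part (d) is correct as far as it goes: $\tau_2$ swaps $s_1$ and $s_2$, the $H$-orbit condition is $s_1^3=s_2^3$, and $(s_1^3-s_2^3)^2=v^2-4u^3$; likewise your observation that $27-18u-u^2+4v$ restricts to $128(9-u)$ on the line of part (c) correctly accounts for the exclusion $u\ne 9$.

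There are, however, two genuine gaps. (1) Your recipe for part (c) --- ``imposing this extra symmetry on the even cubic $t^3-s_1t^2+s_2t-1$'' --- cannot work as stated: in $\overline{G}\iso S_3$ the order-three element $\rho$ does not normalize $\langle\sigma\rangle$ with $\sigma: X\mapsto -X$, so $\rho$ induces no automorphism of the quotient line $t=X^2$ and there is no symmetry of that cubic to impose. One must work with the six roots directly (or start from $Y^2=X^6+cX^3+1$ and track where its elliptic involutions sit after normalization); this is precisely the computation that produces $4v-u^2+110u-1125=0$, i.e.\ the heart of (c), and it is missing. (2) The exceptional points, which you yourself identify as the crux, are not actually controlled. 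The special points of (a),(b) are $(0,0)$, $(225,6750)$, $(25,-250)$; all three lie on $v^2=4u^3$, and the latter two lie on $4v-u^2+110u-1125=0$ (the two loci meet exactly at $u=9,25,225$). So your principle ``delete the points where the group jumps'' yields the exclusions $u\ne 9,25,225$ for (c); it does not produce $u=70+30\sqrt{5}$, and your assertion that the remaining flagged values are where the group jumps to $GL_2(3)$ or $\bZ_3\sem D_4$ is false for that value (its absolute invariants are irrational, so it is none of the curves of (a),(b)). Whatever is special about $u=70+30\sqrt{5}$, and about $u=1$ in (d), must be identified and proved separately; as written your argument neither explains these exclusions nor reconciles part (c) with part (a) at $u=225$. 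A smaller slip: for $Y^2=X^6-1$ the reduced automorphism group is dihedral of order $12$, not a group containing $A_4$.
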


\begin{proposition}\label{thm_3}  The mapping
$$A: (u,v) \lar (i_1, i_2, i_3)$$
gives a birational parametrization of $\L_2$. The fibers of A of cardinality $>1$ correspond to those curves $\C$
with $|Aut(\C)| > 4$.
\end{proposition}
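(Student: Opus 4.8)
The plan is to present $A$ as a dominant, generically injective morphism onto $\L_2$ and then to match its nontrivial fibres with the presence of extra automorphisms. First I would record that $A$ is the composite of $(u,v)\mapsto (J_2,J_4,J_6,J_{10})$ from \eqref{J_eq} with the passage to the absolute invariants $i_1,i_2,i_3$, hence a morphism on the open set where $J_2=16(u+15)\neq 0$. By Lemma \ref{lem1}, every genus $2$ field with a degree $2$ elliptic subcover — that is, every point of $\L_2$ — admits the normal form $Y^2=X^6-s_1X^4+s_2X^2-1$, so it equals $\C_{u,v}$ for some $(u,v)$, and its absolute invariants are by definition $A(u,v)$. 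Thus $A$ is dominant onto $\L_2$; since both the source $k^2$ and $\L_2$ are irreducible of dimension $2$, the map is generically finite and only its generic fibre remains to be computed.

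Next I would translate fibres of $A$ into isomorphisms of curves. Because the absolute invariants classify genus $2$ curves with $J_2\neq 0$, we have $A(u,v)=A(u',v')$ if and only if $\C_{u,v}\iso\C_{u',v'}$. Such an isomorphism is induced by a M\"obius change of the coordinate $X$ carrying one sextic to the other, and it must carry the pair of elliptic involutions $\{z_1,z_0z_1\}$ of $\C_{u,v}$ to that of $\C_{u',v'}$. For the generic member of $\L_2$ one has $\Aut(\C)\iso V_4$, so there is a unique such pair and, in the normal form, it acts as $X\mapsto -X$; hence the M\"obius map normalizes $\langle X\mapsto -X\rangle$ while preserving the shape of the normal form, i.e. it lies in the group $H\iso D_6$ described after Lemma \ref{lem1}. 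Since $u$ and $v$ generate the ring of $H$-invariants, this forces $(u,v)=(u',v')$. A dominant, generically injective morphism between irreducible surfaces is birational, which gives the first assertion. Equivalently, one may invert $A$ computationally: from \eqref{J_eq} one solves for $u$ and $v$ as rational functions of the $J_{2i}$, and the fact that this elimination returns single-valued, degree-one expressions is exactly the generic injectivity just established.

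Finally I would analyse the exceptional fibres. By the dictionary above, the fibre of $A$ over $[\C]$ is in bijection with the set of $H$-orbits of normal coordinates on $\C$, and each pair $\{z_1,z_0z_1\}$ of elliptic involutions contributes exactly one such orbit, hence one value of $(u,v)$. Conversely, distinct values of $(u,v)$ over the same curve arise from distinct pairs, so $|A^{-1}(A(u,v))|>1$ forces $\C$ to carry more than one pair of elliptic involutions, whence $\Aut(\C)\supsetneq V_4$ and $|\Aut(\C)|>4$. For the remaining direction, when $\Aut(\C)$ is one of the larger groups $D_4$, $D_6$, $W_1$, $\bZ_3\sem D_4$ of Lemma \ref{groups} and Proposition \ref{u_v}, the extra automorphisms produce a second pair of elliptic involutions and therefore a second value of $(u,v)$ over $[\C]$.

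The step I expect to be the main obstacle is this last one: making the fibre count exact. One must verify that distinct pairs of elliptic involutions really land in distinct $H$-orbits — so that no two of them are accidentally identified on the special loci — and that the $H$-action behaves as expected where several exceptional families meet. The cleanest way to settle this, consistent with the computational spirit of the paper, is to use the explicit description of the loci with $|\Aut(\C)|>4$ from Proposition \ref{u_v} (the points $(0,0)$ and $(225,6750)$, the curve $v^2-4u^3=0$, and the line $4v-u^2+110u-1125=0$) and to check directly on each family that $A$ identifies exactly the predicted number of $(u,v)$-points, so that the fibre has cardinality $>1$ there and nowhere else.
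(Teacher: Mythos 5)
The paper itself gives no argument for this proposition (it defers to \cite{SV1}), so your reconstruction is the relevant object to judge. Your first two paragraphs are correct and are exactly the argument the surrounding text is set up to support: dominance comes from Lemma \ref{lem1}, and generic injectivity comes from the fact that for a curve with $\Aut(\C)\iso V_4$ the normal coordinate is determined up to the group $H\iso D_6$, of which $u$ and $v$ are invariants; this yields birationality and also the implication that a fiber of cardinality $>1$ forces $|\Aut(\C)|>4$, which is the direction actually needed for the statement.

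The genuine gap is in your converse, and it is not merely a verification you postponed: the inference ``a second pair of elliptic involutions, therefore a second value of $(u,v)$'' is a non sequitur. The correct dictionary is that the fiber of $A$ over $[\C]$ is in bijection with the conjugacy classes, in the reduced group $\bAut(\C)=\Aut(\C)/\langle z_0\rangle\subset PGL_2(k)$, of involutions fixing no Weierstrass point: conjugate elliptic involutions admit normal coordinates yielding literally the same sextic, hence the same $(u,v)$. For $G\iso D_4$ (reduced group $V_4$) and for $G\iso \bZ_3\sem D_4$ (reduced group dihedral of order $12$) there are indeed two such classes and the fiber has two points, in agreement with Proposition \ref{u_v}(a) and with the $j$, $j^\prime$ computation in the text. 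But for $G\iso D_6$ the reduced group is $S_3$ and all six elliptic involutions lie over the single conjugacy class of transpositions, and for $G\iso W_1$ the reduced group is $S_4$ and the six fixed-point-free involutions again form one class; in both cases the fiber is a singleton. Proposition \ref{u_v}(b) exhibits this explicitly: the unique curve with $G\iso W_1$ has the single preimage $(u,v)=(25,-250)$. So the biconditional you set out to prove is false, and the computational check you flag as the ``main obstacle'' would refute it rather than confirm it on two of the four families; the second sentence of the proposition can only be sustained as the one-way containment that your second paragraph already establishes.
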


\begin{proof} See \cite{SV1} for the details.
\end{proof}

\subsection{Elliptic subcovers}

Let $j_1$ and $j_2$ denote the j-invariants of the elliptic curves $E_1$ and $E_2$ from lemma \ref{lem1}. The
invariants $j_1$ and $j_2$ and are roots of the quadratic
\begin{equation}
\begin{split}
j^2+256\frac {(2u^3-54u^2+9uv-v^2+27v)} {(u^2+18u-4v-27)} j + 65536 \frac {(u^2+9u-3v)} {(u^2+18u-4v-27)^2} &
=0 \label{j_eq}
\end{split}
\end{equation}

\subsubsection{Isomorphic elliptic subcovers}

The elliptic curves $E_1$ and $E_2$ are isomorphic when equation \eqref{j_eq} has a double root. The
discriminant of the quadratic is zero for
$$(v^2-4u^3)(v-9u+27)=0$$

\begin{remark} From lemma \ref{lem1},  $v^2=4u^3$ if and only if  $Aut(\C)\iso
D_4$. So for $\C$ such that $Aut(\C)\iso D_4$, $E_1$ is isomorphic to $E_2$. It is easily checked that $z_1$
and $z_2=z_0 z_1$ are conjugate when $G\iso D_4$. So they fix isomorphic subfields.
\end{remark}
\par If $v=9(u-3)$ then the  locus of these curves is  given by,
\begin{equation}
\begin{split}
4i_1^5-9i_1^4+73728i_1^2i_3 -150994944 i_3^2=0 \\
289i_1^3 - 729 i_1^2 +54 i_1i_2 -i_2^2=0 \\
\end{split}
\end{equation}
For $(u,v)=(\frac 9 4, - \frac {27} 4)$ the curve has $Aut(\C)\iso D_4$ and for $(u,v)=(137, 1206)$ it has
$Aut(\C)\iso D_6$. All other curves with $v=9(u-3)$
 belong to the general case, so $Aut(\C)\iso V_4$. The j-invariants of elliptic
curves are $j_1=j_2=256(9-u)$. Thus, these genus 2 curves are parameterized by the j-invariant of the
elliptic subcover.

\begin{remark}
This embeds the moduli space $\mathcal M_1$ into $\mathcal M_2$ in a functorial way.
\end{remark}

\subsection{Isogenous degree 2 elliptic subfields}

In this section we study pairs of degree 2 elliptic subfields of $K$ which are 2 or 3-isogenous. We denote by
$\Phi_n(x,y)$ the n-th modular polynomial (see Blake et al. \cite{Blake} for the formal definitions. Two
elliptic curves with j-invariants $j_1$ and $j_2$ are $n$-isogenous if and only if $\Phi_n(j_1,j_2)=0$.

\subsubsection{3-Isogeny.} Suppose $E_1$ and $E_2$ are 3-isogenous. Then, from equation (\ref{j_eq})
and $\Phi_3(j_1,j_2)=0$ we eliminate $j_1$ and $j_2$. Then,
\begin{equation}
(4v-u^2+110u-1125)\cdot g_1(u,v)\cdot g_2(u,v)=0 \label{iso_3}
\end{equation}
where $g_1$ and $g_2$ are
\begin{small}
\begin{equation}
\begin{split}
g_1 & =-27008u^6+256u^7-2432u^5v+v^4+7296u^3v^2-6692v^3u-1755067500u\\
 & +2419308v^3-34553439u^4+127753092vu^2+16274844vu^3-1720730u^2v^2\\
 & -1941120u^5+381631500v+1018668150u^2-116158860u^3+52621974v^2\\
 & +387712u^4v -483963660vu-33416676v^2u+922640625 \\
\end{split}
\end{equation}
\end{small}
\begin{small}
\begin{equation}
\begin{split}
g_2 & =291350448u^6-v^4u^2-998848u^6v-3456u^7v+4749840u^4v^2+17032u^5v^2\\
 &  +4v^5+80368u^8+256u^9+6848224u^7-10535040v^3u^2-35872v^3u^3+26478v^4u\\
 &  -77908736u^5v+9516699v^4+307234984u^3v^2-419583744v^3u-826436736v^3\\
 &  +27502903296u^4+28808773632vu^2-23429955456vu^3+5455334016u^2v^2\\
 &  -41278242816v+82556485632u^2-108737593344u^3-12123095040v^2\\
 & +41278242816vu+3503554560v^2u+5341019904u^5-2454612480u^4v\\
\end{split}
\end{equation}
\end{small}
Thus, there is a isogeny of degree 3 between $E_1$ and $E_2$ if and only if $u$ and $v$ satisfy equation
\eqref{iso_3}. The vanishing of the first factor is equivalent to $G\iso D_6$. So, if $Aut(\C)\iso D_6$ then
$E_1$ and $E_2$ are isogenous of degree 3. This was also noted by Gaudry and Schost \cite{GS}.

\subsubsection {2-Isogeny}
Below we give the modular 2-polynomial.
\begin{equation}
\begin{split}
\Phi_2 & =x^3-x^2y^2+y^3+1488xy(x+y)+40773375xy-162000(x^2-y^2)+\\
&8748000000(x+y)-157464000000000 \\
\end{split}
\end{equation}
Suppose $E_1$ and $E_2$ are isogenous of degree 2. Substituting $j_1$ and $j_2$ in $\Phi_2$ we get
\begin{equation}
 f_1(u,v)\cdot f_2(u,v)=0
\label{phi_2}
\end{equation}
where $f_1$ and $f_2$ are
\begin{small}
\begin{equation}
\begin{split}
f_1 &
=-16v^3-81216v^2-892296v-2460375+3312uv^2+707616vu+3805380u+\\
&  18360vu^2 -1296162u^2 -1744u^3v-140076u^3+801u^4+256u^5 \\
\end{split}
\end{equation}
\end{small}
\begin{small}
\begin{equation}
\begin{split}
f_2 & =4096u^7+256016u^6-45824u^5v+4736016u^5-2126736vu^4+23158143u^4\\
&  -25451712u^3v-119745540u^3+5291136v^2u^2-48166488vu^2-2390500350u^2\\
& -179712uv^3+35831808uv^2+1113270480vu+9300217500u-4036608v^3\\
& -1791153000v-8303765625-1024v^4+163840u^3v^2-122250384v^2+256u^2v^3 \\
\end{split}
\end{equation}
\end{small}

\subsubsection{Other isogenies between  elliptic subcovers}

If $G\iso D_4$, then $z_1$ and $z_2$ are in the same conjugacy class. There are again two conjugacy classes
of elliptic involutions in $G$. Thus, there are two degree 2 elliptic subfields (up to isomorphism) of $K$.
One of them is determined by double root $j$ of the equation \eqref{j_eq}, for $v^2-4u^3=0$. Next, we
determine the j-invariant $j^\prime$ of the other degree 2 elliptic subfield and see how it is related to
$j$.
$$
\xymatrix{
 & & C  \ar@{->}[dl]  \ar@{->}[dll]   \ar@{->}[dr]  \ar@{->}[drr]    & & & \\
E_1  \ar@{~}[r]& E_2 & & E_1^\prime \ar@{~}[r]& E_2^\prime & \\
}
$$
If $v^2-4u^3=0$ then $\bG\iso V_4$ and $\P=\{\pm 1, \pm \sqrt{a}, \pm \sqrt{b}\}$. Then, $s_1= a + \frac 1 a
+ 1=s_2$. Involutions of $\C$ are $\tau_1: X\to -X$, $\tau_2: X\to \frac 1 X$, $\tau_3: X\to - \frac 1 X$.
Since $\tau_1 $ and $\tau_3$ fix no points of $\P$ the they lift to involutions in $G$. They each determine a
pair of isomorphic elliptic subfields. The j-invariant of elliptic subfield fixed by $\tau_1$ is the double
root of equation (\ref{j_eq}), namely
$$ j= -256 \frac {v^3} {v+1} $$
To find the j-invariant of the elliptic subfields fixed by $\tau_3$ we look at the degree 2 covering $\phi:
\bP^1\to \bP^1$, such that $\phi(\pm 1)=0$, $\phi(a)=\phi(-\frac 1 a)=1$, $\phi(-a)=\phi(\frac 1 a)=-1$, and
$\phi(0)=\phi(\infty)=\infty$. This covering is, $\phi (X)= \frac {\sqrt{a}} {a-1} \frac {X^2-1} {X}$. The
branch points of $\phi$ are $q_i= \pm \frac {2i \sqrt{a}} {\sqrt{a-1}}$. From lemma \ref{lem1} the elliptic
subfields $E_1^\prime$ and $E_2^\prime$ have 2-torsion points $\{ 0, 1, -1,q_i\}$. The j-invariants of
$E_1^\prime$ and $E_2^\prime$ are
$$j^\prime= -16 \frac {(v-15)^3} {(v+1)^2}$$
Then $\Phi_2(j, j^\prime)=0$, so $E_1$ and $E_1^\prime$ are isogenous of degree 2. Thus, $\tau_1$ and
$\tau_3$ determine degree 2 elliptic subfields which are 2-isogenous.


\section{Genus 2 curves with degree 3 elliptic subcovers}

This case was studied in detail in\cite{Sh1}. The main theorem was:

\begin{theorem}\label{main_thm_deg3}
Let $K$ be a genus 2 field and $e_3(K)$ the number of $Aut(K/k)$-classes
 of elliptic subfields of $K$ of degree 3.  Then;

i)  $e_3(K) =0, 1, 2$, or  $4$

ii)    $e_3(K) \geq 1$ if and only if
 the classical invariants of $K$ satisfy  the irreducible
equation   $F(J_2, J_4, J_6, J_{10})=0$ displayed  in  \cite[Appendix A]{Sh1}.
\end{theorem}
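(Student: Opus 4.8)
The plan is to mirror the strategy used for the degree 2 case in Theorem \ref{mainthm_kap3}: first pin down a normal form for curves carrying a degree 3 elliptic subcover, then translate the \emph{existence} of such a subcover into a polynomial condition on the classical invariants via an explicit parametrization followed by elimination. For part (ii) I would begin with a maximal degree 3 covering $\psi: C \lar E$ and pass to the induced degree 3 map $\phi: \bP^1 \lar \bP^1$ from the commuting diagram of Section 3. By Theorem \ref{thm1} with $n=3$, the six images of the Weierstrass points split into two triples $W^{(1)}$ and $W^{(2)}$ with $|\phi(W^{(1)})| = 1$ and $|\phi(W^{(2)})| = 3$. Using the three-dimensional group of Moebius transformations on the source and target $\bP^1$, I would normalize $\phi$ so that $W^{(1)}$ is the root set of a cubic $X^3+aX^2+bX+1$, whereupon the prescribed ramification type of $\phi$ forces the partner triple $W^{(2)}$ to be the root set of $4X^3+b^2X^2+2bX+1$. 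This produces the model $Y^2=(X^3+aX^2+bX+1)(4X^3+b^2X^2+2bX+1)$, exhibiting $\L_3$ as the image of the map $(a,b)\mapsto [C]$.

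Next I would compute the Igusa invariants $J_2,J_4,J_6,J_{10}$ of this sextic as explicit polynomials in $(a,b)$ and, to control the size of the computation and absorb the ambiguity in the normalization, re-express everything through the invariants $u=ab$ and $v=b^3$ of the residual finite group actions encoded by the Galois coverings $k^2\to k^2\to k^2$, and finally through the cubic invariants $(r_1,r_2)$. Passing to the absolute invariants $i_1,i_2,i_3$ and eliminating $u$ and $v$ yields a single relation, which after clearing denominators by an appropriate power of $J_2$ is the polynomial $F(J_2,J_4,J_6,J_{10})$; it vanishes on $\L_3$ by construction, giving the forward implication. For the converse I would argue that the elimination ideal is principal and that the parametrization $(u,v)\mapsto(i_1,i_2,i_3)$ is generically finite onto its image, so that $\{F=0\}$ is an irreducible surface of dimension two containing the two-dimensional image $\overline{\L_3}$; equality of these two irreducible sets of the same dimension then gives $e_3(K)\geq 1$ on a dense open locus, and the finitely many remaining strata are checked directly.

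For part (i) I would count the $\Aut(K)$-orbits on the finite set of degree 3 elliptic subfields. Since $n=3$ is odd, the complementary cover construction of Section 3 (the two exact sequences relating $E_1,E_2$ together with the uniqueness statement of Kuhn) shows that maximal degree 3 subcovers occur in well-defined pairs, so their number is even and every $C\in\L_3$ carries at least one pair; this is exactly why no odd value $\geq 3$ can occur. Letting $G=\Aut(K)$ act, a generic curve has $G\iso\bZ_2$ with the two members of a single pair inequivalent, giving $e_3=2$; the count drops to $e_3=1$ precisely when an automorphism interchanges the members of a pair, and rises to $e_3=4$ on the special stratum carrying two distinct pairs. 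Matching these strata against the admissible groups on $\L_3$ from Lemma \ref{groups} (namely $\bZ_2,V_4,D_8,D_{12}$) yields the list $e_3\in\{0,1,2,4\}$, with the value $0$ being the complement of $\L_3$.

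The hardest part will be the elimination together with the irreducibility claim for $F$. The invariants $J_{2i}(a,b)$ have high degree, so the Gr\"obner/resultant computation is heavy and must be staged through the auxiliary invariants $(u,v)$ and then $(r_1,r_2)$ to stay tractable. Even granting the computation, proving that the eliminant is \emph{irreducible}, and not merely that it cuts out $\L_3$ set-theoretically, requires ruling out proper factors; I would do this by importing the Hurwitz space input of Section 4, where the single braid orbit for $n=3$ shows that $\Hs$, and hence its image $\L_3$, is irreducible, which forces $F$ to be irreducible up to a unit. Establishing this bridge between irreducibility of the braid orbit and irreducibility of the eliminated equation is the main obstacle.
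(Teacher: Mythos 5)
Your treatment of part (ii) follows the paper's route closely: the normal form $Y^2=(X^3+aX^2+bX+1)(4X^3+b^2X^2+2bX+1)$ extracted from the ramification analysis of Theorem \ref{thm1}, the passage $(a,b)\to(u,v)\to(r_1,r_2)$ to keep the elimination tractable, and the resulting equation $F(J_2,J_4,J_6,J_{10})=0$ are exactly Lemma \ref{lemma3_1} and \S\ref{r1r2} of the paper. Your suggestion to obtain irreducibility of $F$ from the single braid orbit of the associated Hurwitz space, and the dimension-count for the converse direction, are reasonable supplements to what the survey leaves implicit (it defers these points to \cite{Sh1}).

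Part (i) is where there is a genuine gap. Your argument --- degree 3 subcovers come in Kuhn pairs, hence their number is even, and matching against the admissible automorphism groups ``yields'' $e_3\in\{0,1,2,4\}$ --- does not actually bound $e_3$ and does not exclude $e_3=3$: if a curve carried two pairs and an automorphism identified the two members of one pair but not of the other, you would get three classes, and nothing in your argument forbids this, nor does anything forbid three or more pairs (hence $e_3\ge 5$). The pairing $E\mapsto E'$ is an involution on subfields, not on $\Aut(K)$-classes, so no parity of the class count can be extracted from it. The paper's mechanism is different and is the real content of part (i): by Lemma \ref{lemma3_1}(iii), the $\Aut(K)$-classes of degree 3 elliptic subfields of $K$ are in bijection with the fiber $\th^{-1}(i_1,i_2,i_3)$ of the map $\th:(u,v)\mapsto(i_1,i_2,i_3)$; since $[k(\u,\v):k(\r_1,\r_2)]=2$ and $k(\r_1,\r_2)=k(\i_1,\i_2,\i_3)$, this map has generic degree 2, so $e_3\le 2$ off an exceptional locus; and an implicit-function-theorem argument shows that any fiber with more than two points must meet the common zero locus of the Jacobian minors of $\th$, which is computed explicitly (the curve \eqref{n_3_iso1} together with seven isolated points) and inspected case by case to produce exactly the values $1$, $2$, and the two curves with $e_3=4$. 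Without this degree-2 covering argument, or an equally explicit classification of the exceptional fibers, the list $\{0,1,2,4\}$ is not established.
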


There are exactly two genus 2 curves (up to isomorphism) with $e_3(K)=4$. The case $e_3(K)=1$ (resp., 2) occurs
for a 1-dimensional (resp., 2-dimensional)  family of genus 2 curves, see \cite{Sh1}.

\begin{lemma}\label{lemma3_1}
Let $K$ be a genus 2 field and $E$ an elliptic subfield of degree 3.

i) Then $K=k(X,Y)$ such that
\begin{equation}\label{n_3_form1}
Y^2= (4X^3+b^2X^2+2bX+1) (X^3+aX^2+b X+1)
\end{equation}
for $a, b \in k$ such that
\begin{equation}\label{n_3_form2}
\begin{split}
(4a^3+27-18ab-a^2b^2+4b^3)(b^3-27) \neq 0
\end{split}
\end{equation}
The roots of the first (resp. second) cubic correspond to $W^{(1)}(K,E)$, (resp. $W^{(2)}(K,E)$) in the
coordinates $X,Y$, (see theorem \ref{thm1}).

ii) $E=k(U,V)$ where $$U= \frac {X^2} {X^3+aX^2+b X+1}$$ and
\begin{equation}\label{eq_E1}
V^2= U^3 +2 \frac {ab^2-6a^2+9b} {R} U^2 + \frac {12a-b^2 } {R} U - \frac 4 {R}
\end{equation}
where $R=4a^3+27-18ab-a^2b^2+4b^3 \neq 0$.

iii) Define
$$ u:=ab, \quad v:=b^3 $$
Let $K^\prime$ be a genus 2 field and $E^\prime \subset K^\prime$ a degree 3 elliptic subfield. Let
$a^\prime, b^\prime$ be the associated parameters as above and $u^\prime:=a^\prime b^\prime$,
$v=(b^\prime)^3$. Then, there is a $k$-isomorphism $K \to K^\prime$ mapping $E\to E^\prime$ if and only if
exists a third root of unity $\e \in k$ with $a^\prime=\e a$ and $b^\prime=\e^2 b$. If $b\neq 0$ then such
$\e$ exists if and only if $v=v^\prime$ and $u=u^\prime$.

iv) The classical invariants of $K$ satisfy equation \cite[Appendix A]{Sh1}.
\end{lemma}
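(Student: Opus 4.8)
The plan is to build $K$ as the double cover of the source $\bP^1$ attached to the induced degree-$3$ map $\phi:\bP^1\to\bP^1$ of Theorem~\ref{thm1}, read off the two cubics from an explicit normal form of $\phi$, and then obtain iii)--iv) by bookkeeping and elimination. For i), start from a maximal $\psi:C\to E$ of degree $3$ and its induced $\phi$. By Theorem~\ref{thm1} (generic Case I, $n=3$) the map $\phi$ has degree $3$ with four simple branch points, and by Lemma~\ref{lem2} the six Weierstrass points distribute over $E[2]$ as $3+1+1+1$: over one $2$-torsion image the whole fibre of $\phi$ is unramified and consists of three Weierstrass points (one triple), while over each of the remaining three $2$-torsion images the fibre is a transposition contributing its single unramified point (the other triple); the fourth branch point of $\phi$ lies off $\pi_E(E[2])$. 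These are the two triples $W^{(1)},W^{(2)}$. After fixing the Moebius freedom by sending the first $2$-torsion image to $U=\infty$ and normalizing, one is led to write $\phi(X)=X^2/q(X)$ with $q(X)=X^3+aX^2+bX+1$; then $\phi^{-1}(\infty)$ is exactly the set of roots of $q$, and from $\phi'(X)=-X(X^3-bX-2)/q(X)^2$ the finite branch values are the images of the roots $r_1,r_2,r_3$ of $X^3-bX-2$, with $U=0$ the fourth, non-torsion, branch point.

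The key step is to cut out the second triple. Over a finite branch value the unramified preimage $w$ satisfies $r_i^2w=-1$ by Vieta (the fibre equation is $q_jX^3+(aq_j-1)X^2+bq_jX+q_j=0$ with a double root at $r_i$), so $w_i=-1/r_i^2$; computing the monic cubic with these roots from $X^3-bX-2$ via symmetric functions gives precisely $4X^3+b^2X^2+2bX+1$, which explains why the first factor in \eqref{n_3_form1} depends on $b$ alone. Since $\pi_C$ is ramified over exactly those $X$ where $\phi$ has odd local degree above $\pi_E(E[2])$, the six Weierstrass points are the roots of $p(X)q(X)$ with $p(X)=4X^3+b^2X^2+2bX+1$, giving $K=k(X,Y)$ as in \eqref{n_3_form1}. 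For the nondegeneracy I would simply compute the two discriminants: $\mathrm{disc}(q)=-(4a^3+27-18ab-a^2b^2+4b^3)$ and $\mathrm{disc}(p)=16(b^3-27)$, so condition \eqref{n_3_form2} is exactly separability of the two cubics, while their coprimality is automatic since the two triples lie over distinct points ($\infty$ versus finite values) of $\bP^1_U$; thus the sextic has six distinct roots and $K$ has genus $2$.

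For ii) I would recover $E$ as the double cover of $\bP^1_U$ branched at its four $2$-torsion images $\{\infty,\phi(r_1),\phi(r_2),\phi(r_3)\}$, so $V^2=\prod_i\bigl(U-\phi(r_i)\bigr)$; evaluating the symmetric functions of $\phi(r_i)=r_i^2/q(r_i)$ with the help of the relation $r_i^3=br_i+2$ and clearing denominators produces \eqref{eq_E1} with $R=4a^3+27-18ab-a^2b^2+4b^3$, and this is the genuine subcover because $U=\phi$ already realizes $\psi$ at the level of function fields. For iii) I would determine the residual coordinate freedom: after the normalization the points $X=0$ and $X=\infty$ are distinguished, so $X$ survives only up to scalings $X\mapsto\lambda X$, and keeping the constant terms of the two cubics equal to $1$ forces $\lambda^3=1$; such a scaling sends $(a,b)\mapsto(\e a,\e^2 b)$ for the third root of unity $\e=\lambda^2$, which is the asserted action. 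Hence $u=ab$ and $v=b^3$ are invariant, and when $b\neq0$ the pair $(u,v)$ conversely recovers $(a,b)$ up to this $\bZ_3$-action (read $b$ off $v=b^3$, then $a=u/b$), yielding the stated isomorphism criterion. Part iv) is then a substitution-and-elimination: insert $p(X)q(X)$ into the Igusa invariants, express $J_2,\dots,J_{10}$ as polynomials in $(a,b)$ (or $(u,v)$), and eliminate the parameters by a Gr\"obner basis computation to obtain the equation $F(J_2,J_4,J_6,J_{10})=0$ of \cite[Appendix A]{Sh1}.

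The main obstacle is the explicit parametrization in i)--ii): everything after it is either bookkeeping (iii) or a mechanical elimination (iv), but matching the abstract ramification data of Theorem~\ref{thm1} to the concrete cover $\phi(X)=X^2/q(X)$ and proving that the second Weierstrass triple is cut out by $4X^3+b^2X^2+2bX+1$ is where the real work lies. The one point to handle with care is the parity criterion---a point of the source is a branch point of $\pi_C$ iff $\phi$ has odd local degree there over a $2$-torsion image---since it is precisely what discards the ramified preimages $r_i$ and keeps the points $w_i=-1/r_i^2$.
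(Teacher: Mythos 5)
The survey does not actually prove this lemma; it cites \cite{Sh1} for the details. Your reconstruction follows the same route as that source: normalize the induced degree-$3$ cover to $\phi(X)=X^2/(X^3+aX^2+bX+1)$, locate the finite ramification via $\phi'(X)=-X(X^3-bX-2)/q(X)^2$, and identify the second Weierstrass triple. Your computations check out: the Vieta relation $r_i^2w_i=-1$ for the fibre $X^3-\frac{1-ca}{c}X^2+bX+1$ is right, the elementary symmetric functions of $-1/r_i^2$ do give $4X^3+b^2X^2+2bX+1$, the discriminants are $-R$ and $16(b^3-27)$ as you say, the symmetric-function evaluation of $\prod_i\bigl(U-\phi(r_i)\bigr)$ yields \eqref{eq_E1}, and the residual $\bZ_3$ of coordinate changes acts by $(a,b)\mapsto(\e a,\e^2 b)$. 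Two small things you gloss over: for ii) you should exhibit $V$ inside $K$ (one finds $V= c\,Y(X^3-bX-2)/q(X)^2$, since $\prod_i(U-\phi(r_i))$ equals $(X^3-bX-2)^2\,p(X)/q(X)^3$ up to a constant and $Y^2=p(X)q(X)$); this is exactly the $V_1$ of Lemma~\ref{lemma3_2}, and without it the identification of $k(U,V)$ with the given subfield $E$ is incomplete.

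The genuine gap is that your argument is confined to the generic ramification (Case I of Theorem~\ref{thm1}), which you invoke explicitly, whereas the lemma is stated for an arbitrary degree-$3$ elliptic subfield $E$. For $n=3$ the degenerate possibility Case III ii) does occur (it is the ``degenerate case'' the survey returns to in the discussion of $729j_1j_2-(j_2-432)^3=0$): there $\psi$ has a single branch point, $\phi$ has only three branch points, all in $\pi_E(E[2])$, one of them carrying a triple point which is itself a Weierstrass point. Your normalization hinges on the existence of a free branch point of $\phi$ outside $\pi_E(E[2])$ (sent to $U=0$, with ramified preimage $X=0$), and in the form $X^2/q(X)$ one checks that $U=0$ can never coincide with a $\phi(r_i)$ (that would force $r_i=0$, impossible since $r_i^3-br_i-2=0$), so the degenerate covers are simply not reached by your parametrization of the pair $(K,E)$. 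You need either a separate normal form for that stratum, or an argument that such $(K,E)$ is handled through the complementary subcover $E'$ (which is what the survey implicitly does when it assumes $K/E_2$, rather than $K/E_1$, is the degenerate extension), together with a word about the case in which both extensions degenerate. Part iv) as a Gr\"obner elimination is correct in principle but, as the survey notes, infeasible directly; the actual computation routes through the auxiliary invariants $r_1,r_2$ of the two cubics.
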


\medskip

Let
\begin{equation}
\begin{split}
F(X) & :=X^3+a X^2+b X+1\\
G(X) & :=4X^3+b^2X^2+2bX+1
\end{split}
\end{equation}

Denote by $R=4a^3+27-18ab-a^2b^2+4b^3$ the resultant of $F$ and $G$. Then we have the following lemma.

\begin{lemma}\label{lemma3_2} Let $a,b \in k$ satisfy equation \eqref{n_3_form2}.
Then equation \eqref{n_3_form1} defines a genus 2 field $K=k(X,Y)$. It has elliptic subfields of degree 3,
$E_i=k(U_i, V_i)$, $i=1,2$, where $U_i$, and $V_i$ are as follows:
$$
U_1 = \frac {X^2} {F(X)}, \quad V_1= Y\, \frac {X^3-bX-2} {F(X)^2}
$$

\begin{small}
\begin{equation}
\begin{split}
U_2 & = \left\{ \aligned
  \frac {(X-s)^2 (X-t)} {G(X) } &\quad if \quad b(b^3-4ba+9) \neq 0 \\
  \frac { (3X- a)} {3(4X^3+1)} & \quad if \quad b= 0\\
  \frac { (bX+3)^2} {b^2G(X)}  &\quad if \quad (b^3-4ba+9) = 0\\
\endaligned
\right.
\end{split}
\end{equation}
\end{small}

where
$$ s=- \frac 3 b, \quad t=\frac {3a-b^2} {b^3-4ab+9}$$

\begin{scriptsize}
\begin{equation}
\begin{split}
V_2 & = \left\{ \aligned
 \frac {\sqrt {27-b^3} Y} {G(X)^2}
((4ab-8-b^3)X^3 -(b^2-4ab)X^2 +bX+1) & \quad if \quad b(b^3-4ba+9) \neq 0\\
 Y \frac {8X^3-4aX^2-1} {(4X^3+1)^2}   & \quad if \quad b= 0\\
\frac 8 b \sqrt{b} \frac Y {G(X)} (bX^3+9X^2+b^2X+b )    & \quad if \quad (b^3-4ba+9) = 0\\
\endaligned
\right.
\end{split}
\end{equation}
\end{scriptsize}
\end{lemma}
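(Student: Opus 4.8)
The statement bundles three assertions, which I would handle in turn: that $Y^2 = F(X)G(X)$ defines a genus $2$ field, that each stated pair $(U_i,V_i)$ satisfies a nondegenerate Weierstrass cubic, and that $k(U_i,V_i)$ sits inside $K$ with $[K:k(U_i,V_i)]=3$. The plan is to reduce the first to a discriminant count, run the second as an explicit coefficient-matching verification, and read off the third from the commutative square of Theorem~\ref{thm1}.

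First I would settle the genus. Since the leading coefficient of $F(X)G(X)$ is $4\neq 0$, the sextic has degree $6$ and infinity is not a branch point, so it suffices to show $FG$ is squarefree, i.e. that $F$ and $G$ each have distinct roots and share none. A direct discriminant computation gives
$$\mathrm{disc}(F) = -R, \qquad \mathrm{disc}(G) = 16\,(b^3 - 27),$$
while $\mathrm{Res}(F,G) = R$ by the definition of $R$. Hence $F$ is separable and coprime to $G$ exactly when $R\neq 0$, and $G$ is separable exactly when $b^3\neq 27$; thus hypothesis \eqref{n_3_form2} is precisely the condition that $FG$ be squarefree, and $Y^2=F(X)G(X)$ defines a genus $2$ curve with function field $K=k(X,Y)$.

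The computational core is the verification of the Weierstrass relations, which I would run first for $E_1$. From $U_1=X^2/F(X)$ one gets that $X$ is a root of $U_1X^3+(aU_1-1)X^2+bU_1X+U_1$ over $k(U_1)$, so $[k(X):k(U_1)]=3$. Using $Y^2=F(X)G(X)$ I would then compute
$$V_1^2 = \frac{G(X)\,(X^3 - bX - 2)^2}{F(X)^3},$$
and the point is that the numerator $X^3-bX-2$ is chosen so that, after clearing $F^3$, the identity
$$G(X)(X^3 - bX - 2)^2 = c_3 X^6 + c_2 X^4 F(X) + c_1 X^2 F(X)^2 + c_0 F(X)^3$$
holds for suitable $c_i\in k(a,b)$. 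Because $X^{2j}F(X)^{3-j}/F(X)^3=U_1^{\,j}$, this collapses $V_1^2$ to the cubic $c_3U_1^3+c_2U_1^2+c_1U_1+c_0$ in $U_1$; comparing with \eqref{eq_E1} one checks it is a constant multiple of that Weierstrass cubic. Since $k$ is algebraically closed the constant is a square, so $k(U_1,V_1)$ coincides with the elliptic subfield $E=k(U,V)$ of Lemma~\ref{lemma3_1}, whose defining cubic has nonzero discriminant (again controlled by $R$ and $b^3-27$). For $E_2$ the same scheme applies with the roles of $F$ and $G$ interchanged: $U_2=(X-s)^2(X-t)/G(X)$ is again a degree $3$ rational function, giving $[k(X):k(U_2)]=3$, and one verifies that $V_2^2$ reduces to a cubic in $U_2$ by an analogous identity. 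Here I would treat the degenerate branches separately, since the generic values $s=-3/b$ and $t=(3a-b^2)/(b^3-4ab+9)$ are undefined when $b=0$ or $b^3-4ab+9=0$; in those two cases I would substitute the alternate formulas recorded in the statement and repeat the verification. The degree of each subcover then follows from the commutative square of Theorem~\ref{thm1}: the projections $C\to\bP^1_X$ and $E_i\to\bP^1_{U_i}$ have degree $2$ and $\bP^1_X\to\bP^1_{U_i}$ has degree $3$, so $[K:E_i]=3$, and the two subfields are distinct because the roots of $F$ and of $G$ realize the partition $W^{(1)},W^{(2)}$ of the Weierstrass points.

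The main obstacle is this second step: the polynomial identities above are genuine degree $9$ relations in $X$, giving an overdetermined system for the four constants $c_i$, so their consistency is not automatic and rests on the special numerators $X^3-bX-2$ and $(X-s)^2(X-t)$ having been reverse-engineered from the geometry of the covers. I expect the $E_2$ computation, split into its three cases, to be the most laborious part of the proof.
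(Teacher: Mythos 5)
The paper offers no argument here at all --- its proof reads ``We skip the details of the proof'' --- so there is nothing to compare your route against; what I can say is that your outline is the natural (and essentially the only) way to prove such a statement, and its bookkeeping is correct where it can be checked without a computer. The discriminant claims are right: $\mathrm{disc}(F)=-R$ and $\mathrm{disc}(G)=16(b^3-27)$, so together with $\mathrm{Res}(F,G)$ being a nonzero multiple of $R$ the hypothesis \eqref{n_3_form2} is exactly squarefreeness of $FG$, giving genus $2$. Your reduction $V_1^2=G(X)(X^3-bX-2)^2/F(X)^3$ and the ansatz $G(X)(X^3-bX-2)^2=c_3X^6+c_2X^4F+c_1X^2F^2+c_0F^3$ is the right mechanism; note that comparing leading coefficients forces $c_0=4$, so against \eqref{eq_E1} (whose constant term is $-4/R$) the proportionality constant is $-R$, which is harmless over an algebraically closed field exactly as you say. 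The degree count $[K:k(U_i,V_i)]=3$ via $[K:k(U_i)]=6$ and $V_i\notin k(U_i)$ is also fine. The one caveat is that your proposal is a verified plan rather than a completed verification: the consistency of the overdetermined systems for the $c_i$ (one identity for $E_1$, three for the branches of $E_2$) is the entire content of the lemma, and you correctly flag but do not carry out these computations; in particular the degenerate branches should be checked literally against the printed formulas, since e.g.\ the $V_2$ formula in the case $b^3-4ab+9=0$ has denominator $G(X)$ rather than $G(X)^2$ and a degree count suggests it may contain a typographical error that only the explicit verification would catch.
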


\begin{proof} We skip the details of the proof.

\end{proof}


\subsection{Function field of $\L_3$}
The absolute invariants $i_1, i_2$, and $i_3$ are expressed in terms of $u,v$.  Let $\u, \v$ be independent
transcendentals over $k$ and $\i_1, \i_2, \i_3 \in k(\u,\v)$. Further elements $\r_1, \r_2 \in k(\u, \v)$ are
defined below; see \S~\ref{r1r2}.

From the resultants of equations if $i_1, i_2, i_3$ in terms of $u, v$,  we determine that
$[k(\v):k(\i_1,\i_2)]=16$, $[k(\v):k(\i_2,\i_3)]=40$, and $[k(\v):k(\i_1,\i_3)]=26$. We also can show that $\u\in
k(\i_1, \i_2, \i_3, \v)$, the expression is large and we display it on \cite[Appendix A]{Sh1}. Thus,
$[k(\u,\v):k(\i_1,\i_2, \i_3)] \leq 2$, see figure \ref{fig_2}.

\begin{figure}[!ht]
$$
\xymatrix{
& & k(\u,\v) \ar@{-}[d]_2       &  &\\
& & k(\i_1, \i_2, \i_3)=k(\r_1, \r_2) \ar@{-}[d]^{20} \ar@{-}[dr]^{13} \ar@{-}[dl]_8 & &\\
& k(\i_1, \i_2) & k(\i_1, \i_3) & k(\i_2, \i_3) }
$$
\caption{} \label{fig_2}
\end{figure}

Computing the equation \cite[Appendix A]{Sh1} directly from the equations of $i_1, i_2, i_3$ in terms of $u, v$,
exceeds available computer power. We use additional invariants $\r_1, \r_2$ to overcome this problem.
\subsubsection{Invariants of Two Cubics}\label{r1r2}
We define the following invariants of two cubic polynomials. For $F(X)=a_3X^3+a_2 X^2 + a_1 X+a_0$ and
$G(X)=b_3 X^3+b_2X^2+b_1X+b_0$ define
$$H(F,G) :=a_3 b_0 - \frac  1 3 a_2 b_1 + \frac 1 3 a_1 b_2 -a_0 b_3$$
We denote by $R(F,G)$ the resultant of $F$ and $G$ and by $D(F)$ the discriminant of $F$. Also,
$$r_1(F,G) =\frac  {H(F,G)^3} {R(F,G)}, \quad r_2(F,G)=\frac
{H(F,G)^4} {D(F)\, D(G)}$$

\begin{remark}
Note that $D(FG)= D(F) \cdot D(G) \cdot R^2(F, G)$.
\end{remark}

\noindent For
$$F(X)=X^3+aX^2+bX+1, \quad G(X)=4X^3 + b^2X^2 +2bX+1$$
from lemma \ref{lemma3_1} we have
\begin{equation}
\begin{split}\label{eq_r}
r_1(F,G) &= 27\frac {v(v-9-2u)^3} {4v^2-18uv+27v-u^2v+4u^3}\\
r_2 (F,G)& = -1296 \frac  {v(v-9-2u)^4}   {(v-27)(4v^2-18uv+27v-u^2v+4u^3)}\\
\end{split}
\end{equation}

\begin{remark}
Note that $r_1, r_2$ are defined for any $u,v$ by \eqref{n_3_form2}.
\end{remark}

Taking the resultants from the above equations we get the following equations for $\u$ and $\v$ over $k(\r_1,
\r_2)$:

\begin{small}
\begin{equation}
\begin{split} \label{eq_u}
65536\r_1\r_2^3 \,\u^2+(42467328\r_2^4+21233664\r_2^4\r_1+480\r_2\r_1^4+2\r_1^5+41472\r_2^2\r_1^3\\
+1548288\r_2^3\r_1^2-294912\r_2^3\r_1)\u-382205952\r_2^4+238878720\r_2^4\r_1-2654208\r_2^3\r_1\\
+13934592\r_2^3\r_1^2+285696\r_2^2\r_1^3+ 2400\r_2\r_1^4+7\r_1^5 & =0\\
\end{split}
\end{equation}
\end{small}
\begin{small}
\begin{equation}
\begin{split} \label{eq_v}
16384\v^2\r_2^3+(221184\r_2^3\r_1+\r_1^4+11520\r_2^2\r_1^2-442368\r_2^3+192\r_2\r_1^3)\v\\
-5971968\r_2^3\r_1-864\r_2\r_1^3-124416\r_2^2\r_1^2-2\r_1^4 & =0 \\
\end{split}
\end{equation}
\end{small}

In equation \eqref{eq_u} express $\r_1$ and $\r_2$ in terms of $\u$ and $\v$. Roots of this equation are $\u$
and $\nu(\u)$ where,

\begin{small}
\begin{equation}
\begin{split}
\nu(\u) & =\frac {(\v-3\u)(324\u^2+15\u^2\v-378\u\v-4\u\v^2+243\v+72\v^2)}
{(\v-27)(4\u^3+27\v-18\u\v-\u^2\v+4\v^2)}\\
\end{split}
\end{equation}
\end{small}
Similarly for $\v$ we get
\begin{small}
\begin{equation}
\begin{split}
\nu(\v) & =- \frac {4(\v-3\u)^3}{4\u^3+27\v-18\u\v-\u^2\v+4\v^2} \\
\end{split}
\end{equation}
\end{small}

Define a ring homomorphism
$$\nu: k[\u,\v] \to k(\u,\v)$$
$$\u \to \nu(\u), \quad \v \to \nu(\v)$$
Then, we compute $\nu^2=1$. Thus, $\nu$ extends to an involutory automorphism of $k(\u,\v)$ which we again
denote by $\nu$. Since,
$$\tau: k(\u,\v) \to k(\u, \v)$$
$$( \u, \v)  \to \left( \u, \nu(\v) \right)$$
is not involutory, then $[k(\u,\v):k(\r_1,\r_2)]=2$ and $Gal_{k(\u,\v)/k(\r_1, \r_2)}=\<\nu\>$.

\begin{lemma} The fields   $k(\i_1, \i_2, \i_3)=k(\r_1, \r_2)$ are the same.
\end{lemma}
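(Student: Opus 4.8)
The plan is to realize both $k(i_1,i_2,i_3)$ and $k(r_1,r_2)$ as intermediate fields of $k(u,v)/k$ and to compare them using the Galois action computed just above. Since the $i_j$ are rational functions of $u,v$ and the $r_j$ are given explicitly in terms of $u,v$ by \eqref{eq_r}, both fields sit inside $k(u,v)$. The computation preceding the lemma shows that $k(r_1,r_2)$ is precisely the fixed field of the involution $\nu$, namely $[k(u,v):k(r_1,r_2)]=2$ with $\mathrm{Gal}_{k(u,v)/k(r_1,r_2)}=\langle\nu\rangle$. Hence, to obtain the inclusion $k(i_1,i_2,i_3)\subseteq k(r_1,r_2)$, it suffices to check that each $i_j$ lies in the fixed field of $\nu$, i.e. $\nu(i_j)=i_j$ for $j=1,2,3$.

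Next I would establish this $\nu$-invariance. Conceptually, $\nu$ is the deck transformation of the double cover $(u,v)\to(r_1,r_2)$, and it interchanges the two degree $3$ elliptic subfields $E_1,E_2$ of $K$ (equivalently, by Lemma~\ref{lemma3_1}~iii), the two parameter pairs $(u,v)$ attached to the same field $K$ via distinct subcovers). Since $i_1,i_2,i_3$ are absolute invariants of the genus $2$ field $K$ itself and are independent of the choice of elliptic subcover, they must be fixed by $\nu$. In practice I would confirm this by substituting the explicit rational functions $\nu(u),\nu(v)$ into the expressions for $i_1,i_2,i_3$ in terms of $u,v$ and verifying that each is unchanged. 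This verification is the step I expect to be the main obstacle: it is not conceptually delicate, but the formulas for the $i_j(u,v)$ and for $\nu(u),\nu(v)$ are bulky, so the check is a sizeable (purely mechanical) computation best carried out in a computer algebra system.

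Finally I would close the argument by a degree count. Once $k(i_1,i_2,i_3)\subseteq k(r_1,r_2)\subseteq k(u,v)$ is known, multiplicativity of field degrees gives
$$[k(u,v):k(i_1,i_2,i_3)]=[k(u,v):k(r_1,r_2)]\cdot[k(r_1,r_2):k(i_1,i_2,i_3)]=2\cdot[k(r_1,r_2):k(i_1,i_2,i_3)].$$
On the other hand, the earlier bound $u\in k(i_1,i_2,i_3,v)$ together with figure~\ref{fig_2} yields $[k(u,v):k(i_1,i_2,i_3)]\leq 2$. Combining the two displays forces $[k(r_1,r_2):k(i_1,i_2,i_3)]\leq 1$, hence equality, so $k(i_1,i_2,i_3)=k(r_1,r_2)$, as claimed.
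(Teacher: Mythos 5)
Your proof is correct and follows precisely the route the paper sets up: the paper states this lemma without an explicit proof (deferring to \cite{Sh1}), but the surrounding text supplies exactly your ingredients --- the bound $[k(u,v):k(i_1,i_2,i_3)]\le 2$ obtained from the degree computations, the identification of $k(r_1,r_2)$ as the index-$2$ fixed field of the involution $\nu$, and the fact (recorded later via $\nu(j_1)=j_2$) that $\nu$ merely swaps the two elliptic subcovers of the same genus $2$ field and hence fixes the absolute invariants. The only work left in your write-up is the mechanical verification $\nu(i_j)=i_j$, which you correctly flag as a computer-algebra check.
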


\begin{remark}
To find the equation in \cite[Appendix A]{Sh1} we eliminate $\r_1$ and $\r_2$ from the three equations of the
above lemma. This equation has degree 8, 13, and 20 in $\i_1, \i_2, \i_3$ respectively.
\end{remark}

\begin{proof} \textbf{(Theorem~\ref{main_thm_deg3})}
The map
$$\th: (u,v) \to (i_1, i_2, i_3)$$  generically has degree 2, by
previous section. Denote the minors of the Jacobian matrix of $\th$ by $M_1(u,v), M_2(u,v), M_3(u,v)$. The
system

\begin{equation}
\begin{split}
\left\{ \aligned
  M_1(u,v)= 0 \\
  M_2(u,v)=0\\
  M_3(u,v)=0\\
\endaligned
\right.
\end{split}
\end{equation}
has solutions
\begin{equation}
\begin{split}\label{n_3_iso1}
8v^3+27v^2-54uv^2-u^2v^2+108u^2v+4u^3v-108u^3=0\\
\end{split}
\end{equation}
and 7 further solutions which we display in the following table  together with the corresponding values $(i_1,
i_2, i_3)$ and properties of the corresponding genus 2 field $K$.

\begin{figure}[ht!]
\renewcommand\arraystretch{1.5}
\noindent
$$
\begin{array}{|c|c|c|c|c|}
\hline
(u,v) & (i_1, i_2, i_3) & Aut(K) & e_3(K) \\
\hline
(-\frac 7 2, 2) & J_{10}=0, \quad \text{no associated} &  &  \\
 & \text{genus 2 field K}  &  & \\
\hline
(-\frac {775} 8, \frac {125} {96}), & & & \\
 (\frac {25} 2, \frac {250} {9})&
- \frac {8019} {20}, -\frac {1240029} {200}, \frac {531441} {100000}     & D_4 & 2 \\
\hline
(27- \frac {77} 2 \sqrt{-1}, 23+ \frac {77} 9 \sqrt{-1}), &  & & \\
(27+ \frac {77} 2 \sqrt{-1}, 23- \frac {77} 9 \sqrt{-1}) &
(\frac {729} {2116}, \frac {1240029}  {97336}, \frac {531441} {13181630464}  & D_4 & 2   \\
\hline
 (-15+ \frac {35} 8 \sqrt{5}, \frac {25} 2 + \frac {35} 6 \sqrt{5}), &  & & \\

(-15- \frac {35} 8 \sqrt{5}, \frac {25} 2 - \frac {35} 6 \sqrt{5})&
81, - \frac {5103} {25}, -\frac {729} {12500}  &  D_6 & 2  \\
\hline
\end{array}
$$
\caption{Corresponding $(u,v)$ for which the Jacobian matrix of $\theta$ is 0} \label{table1}
\end{figure}

Assume that equation \eqref{n_3_iso1} holds for some $(u,v)\in k^2$. Then the corresponding quantities $J_{2i}$,
$i=1,2,3, 5$  satisfy the equation

\begin{small}
\begin{equation}\label{eq_S}
F(J_2, J_4, J_6, J_{10})=0
\end{equation}
\end{small}
where $F(J_2, J_4, J_6, J_{10}) $ is displayed in \cite{Sh1}. This is obtained by taking the resultants of
equations of $i_1, i_2, i_3$  and \eqref{n_3_iso1}. We define $J_{48}:=F(J_2, J_4, J_6, J_{10}) $. By previous
section $\th$ is generically a covering of degree 2. So exists a Zariski open subset $\U$ of $k^2$ with the
following properties: Firstly, $\th$ is defined everywhere on $\U$ and is a covering of degree 2 from $\U$ to
$\th(\U)$. Further, if $\_u \in \U$ then all $\_u^\prime \in k^2$ with $\th$ defined at $\_u^\prime$ and
$\th(\_u^\prime)=\th(\_u)$ also lie in $\U$. Suppose $\underline i \in k^3$ such that $| \th^{-1}(\underline i)|
>2$ and $det(Jac(\th))$ does not vanish at any point of $\th^{-1}(\underline i) $.
Then by implicit function theorem, there is an open ball $B$ around each element of $\th^{-1}(\underline i) $
such that each point in $\th(B)$ has $> 2$ inverse images under $\th$. But $B$ has to intersect the Zariski
open set $\U$. This is a contradiction. Thus, if ${\underline i} \in k^3$ and $|\th^{-1}(\underline i)| > 2$,
then $det (Jac(\th)) =0$ at some point of $\th^{-1}(\underline i) $ and so $\J$ vanishes.

Let $e_3(K) >1$ and $J_2\neq 0$, $\J \neq 0$. Then $i_1, i_2, i_3$ are defined and by previous paragraph
$|\th^{-1}(i_1, i_2, i_3)| \leq 2$. Thus, by lemma \ref{lemma3_1} part iii) $e_3(K) \leq 2$. This completes the
proof of theorem ~\ref{main_thm_deg3}.

\end{proof}

\subsection{Elliptic subcovers}

We express the j-invariants $j_i$ of the elliptic subfields $E_i$ of $K$, from lemma \ref{lemma3_2}, in terms
of $u$ and $v$ as follows:

\begin{small}
\begin{equation}\label{j_1}
\begin{split}
j_1 &= 16v \frac {(vu^2+216u^2-126vu-972u+12v^2+405v)^3}
{ (v-27)^3(4v^2+27v+4u^3-18vu-vu^2)^2}\\
j_2 &= -256 \frac {(u^2-3v)^3}{v(4v^2+27v+4u^3-18vu-vu^2)} \\
\end{split}
\end{equation}
\end{small}
where $v\neq 0, 27$.
\begin{remark}
The automorphism $\nu \in Gal_{k(u,v)/k(\r_1, \r_2)}$ permutes the elliptic subfields. One can easily check
that:
$$\nu(j_1)=j_2, \quad \nu(j_2)=j_1$$
\end{remark}
\noindent Define $T $ and $N $ as follows;
\begin{scriptsize}
\begin{equation}
\begin{split}
T &= \frac 1 {16777216r_2^3r_1^8}(1712282664960r_2^3r_1^6+1528823808r_2^4r_1^6+49941577728r_2^4r_1^5\\
& -38928384r_2^5r_1^5-258048r_2^6r_1^4+12386304r_2^6r_1^3+901736973729792r_2r_1^{10}\\
& +966131712r_2^5r_1^4+16231265527136256r_1^{10}+480r_2^8r_1+101376r_2^7r_1^2+479047767293952r_2r_1^8\\
& +7247757312r_2^3r_1^8+7827577896960r_2^2r_1^9+2705210921189376r_1^9+619683250176r_2^3r_1^7\\
& +21641687369515008r_1^{12}+32462531054272512r_1^{11}+r_2^9+37572373905408r_2^2r_1^7  \\
& +1408964021452800r_2r_1^9+45595641249792r_2^2r_1^8)\\
N &= - \frac 1 { 68719476736r_1^12r_2^3}(84934656r_1^5+1179648r_1^4r_2-5308416r_1^4-442368r_1^3r_2\\
& -13824r_1^2r_2^2-192r_1r_2^3-r_2^4)^3 \\
\end{split}
\end{equation}
\end{scriptsize}

\begin{lemma}
The j-invariants of the elliptic subfields satisfy the following quadratic equations over $k(r_1, r_2)$;
\begin{equation}\label{eq_j_new}
j^2- T \, j+ N =0
\end{equation}
\end{lemma}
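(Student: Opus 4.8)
The plan is to exploit the Galois-theoretic structure already set up in this section rather than to build the quadratic by brute force. Recall that $[k(u,v):k(r_1,r_2)]=2$ with Galois group $\langle \nu \rangle$, and that, by the Remark immediately preceding the statement, the involution $\nu$ interchanges the two $j$-invariants, $\nu(j_1)=j_2$ and $\nu(j_2)=j_1$. Consequently the elementary symmetric functions $j_1+j_2$ and $j_1 j_2$ are fixed by $\nu$; and since $\langle \nu \rangle$ is the \emph{full} Galois group of this degree $2$ extension, its fixed field is exactly $k(r_1,r_2)$. Hence $j_1+j_2\in k(r_1,r_2)$ and $j_1 j_2\in k(r_1,r_2)$, so $j_1$ and $j_2$ are precisely the two roots of the monic quadratic
\begin{equation}
j^2-(j_1+j_2)\,j+j_1 j_2=0
\end{equation}
with coefficients in $k(r_1,r_2)$. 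This alone establishes that a quadratic of the required shape exists; the remaining work is to identify its coefficients with the stated $T$ and $N$.

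To pin down the coefficients I would first form $j_1+j_2$ and $j_1 j_2$ directly from the explicit formulas \eqref{j_1}. Placing both $j$-invariants over a common denominator and simplifying yields two rational functions $S(u,v):=j_1+j_2$ and $P(u,v):=j_1 j_2$ in $k(u,v)$; by the previous paragraph these are automatically $\nu$-invariant and therefore already lie in $k(r_1,r_2)$. The substantive task is then to rewrite $S$ and $P$ in the coordinates $r_1,r_2$, which I would do using the relations \eqref{eq_r} expressing $r_1,r_2$ as rational functions of $u,v$.

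The cleanest verification route is to treat the claimed identities $T=S$ and $N=P$ as polynomial identities in $k(u,v)$: substitute $r_1=r_1(u,v)$ and $r_2=r_2(u,v)$ from \eqref{eq_r} into the displayed expressions for $T$ and $N$, clear all denominators, and check that the resulting polynomials agree with those coming from $S$ and $P$. Alternatively, and more in keeping with how $T$ and $N$ were presumably produced, one eliminates $u,v$ between the relation $j\cdot(\text{common denominator})=\text{numerator}$ read off from \eqref{j_1} and the resultant equations \eqref{eq_u}, \eqref{eq_v} for $u,v$ over $k(r_1,r_2)$, thereby extracting the minimal polynomial of $j$ directly as $j^2-Tj+N$.

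The main obstacle is exactly this last elimination. As their displayed forms make plain, $T$ and $N$ are rational functions of very high degree in $r_1,r_2$, so the resultant computation—equivalently, the polynomial-identity check after substitution—is far beyond hand calculation and must be delegated to a computer algebra system. Once that identity is confirmed, the factorization $j^2-Tj+N=(j-j_1)(j-j_2)$ gives the stated equation \eqref{eq_j_new} with both $j_1$ and $j_2$ as roots; the degenerate locus where $j_1=j_2$ presents no difficulty, since there the quadratic simply reads $(j-j_1)^2=0$ and continues to hold identically.
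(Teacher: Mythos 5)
Your proposal is correct and its decisive step---verifying by direct substitution (necessarily by computer algebra) that $T=j_1+j_2$ and $N=j_1 j_2$ once everything is rewritten in terms of $u,v$ via \eqref{eq_r} and \eqref{j_1}---is exactly the paper's entire proof, which reads simply ``Substitute $j_1$ and $j_2$ as in \eqref{j_1} in equation \eqref{eq_j_new}.'' Your Galois-theoretic preamble (that $\nu$ generates $Gal_{k(\u,\v)/k(\r_1,\r_2)}$ and swaps $j_1,j_2$, so the elementary symmetric functions must descend to $k(r_1,r_2)$) is a correct and worthwhile addition that the paper leaves implicit, but it does not alter the computation actually required to identify the coefficients with $T$ and $N$.
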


\proof Substitute $j_1$ and $j_2$ as in \eqref{j_1} in equation \eqref{eq_j_new}. \endproof

\subsubsection{Isomorphic Elliptic Subfields}
Suppose that $E_1\iso E_2$. Then, $j_1=j_2$ implies that
\begin{small}
\begin{equation}
\begin{split}
8v^3+27v^2-54uv^2-u^2v^2+108u^2v+4u^3v-108u^3=0\\
\end{split}
\end{equation}
\end{small}
or
\begin{scriptsize}
\begin{equation}
\begin{split}\label{n_3_iso2}
& 324v^4u^2-5832v^4u+37908v^4-314928v^3u-81v^3u^4+255879v^3+30618v^3u^2\\
& -864v^3u^3-6377292uv^2 +8503056v^2-324u^5v^2+2125764u^2v^2-215784u^3v^2\\
& +14580u^4v^2+16u^6v^2+78732u^3v+8748u^5v -864u^6v-157464u^4v+11664u^6 =0\\
\end{split}
\end{equation}
\end{scriptsize}

The former equation is the condition that $det ( Jac(\th))=0$ see \eqref{eq_S}. From equation \ref{eq_S} and
expressions of $i_1, i_2, i_3$  we can express $u$ as a rational function in $i_1, i_2$, and $v$. This is
displayed in \cite[Appendix B]{Sh1}. Also, $[k(v):k(i_1)]=8$ and $[k(v):k(i_2)]=12$. Eliminating $v$ we get a
curve in $i_1$ and $i_2$ which has degree 8 and 12 respectively. Thus, $k(u,v)=k(i_1,i_2)$. Hence, $e_3(K) = 1$
for any $K$ such that the associated $u$ and $v$ satisfy equation \eqref{eq_S}.

\subsubsection{The Degenerate Case}
We assume now that one of the extensions $K/E_i$ from lemma \ref{lemma3_2} is degenerate, i.e. has only one
branch point. The following lemma determines a relation between $j_1$ and $j_2$.

\begin{lemma} Suppose that $K/E_2$ has only one branch point. Then,
$$729 j_1 j_2 -(j_2-432)^3=0$$
\end{lemma}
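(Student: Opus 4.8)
The final statement asserts that when the degree 3 cover $K/E_2$ degenerates to having a single branch point, the $j$-invariants satisfy $729\,j_1 j_2 - (j_2-432)^3 = 0$.

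The plan is to identify, in terms of the parameters $(u,v)$, exactly what the degeneracy hypothesis imposes. By Lemma~\ref{lemma3_2}, the subfield $E_2=k(U_2,V_2)$ is given by explicit formulas whose shape changes according to whether $b(b^3-4ba+9)\neq 0$, $b=0$, or $b^3-4ba+9=0$. The covering $K/E_2$ is a degree 2 cover of elliptic curves, and its branch locus is governed by how the six Weierstrass points of $K$ distribute over the 2-torsion of $E_2$; the induced covering $\phi_2:\bP^1\to\bP^1$ has the ramification structure classified in Theorem~\ref{thm1}. The generic (Case I) structure has the branch data spread over the maximal number of points, whereas a degeneration occurs precisely when two branch points collapse. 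First I would translate ``only one branch point'' into an algebraic condition on $(u,v)$: I expect this is exactly the vanishing of the factor $(b^3-4ba+9)$, i.e. the special case singled out in the piecewise definition of $U_2,V_2$. In $(u,v)$-coordinates, using $u=ab$ and $v=b^3$, the condition $b^3-4ab+9=0$ becomes $v-4u+9=0$, giving a single linear relation $v=4u-9$ on the parameter space.

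The core of the proof is then a direct substitution. I would take the explicit rational expressions for $j_1$ and $j_2$ from Eq.~\eqref{j_1}, namely
\[
j_1 = 16v\frac{(vu^2+216u^2-126vu-972u+12v^2+405v)^3}{(v-27)^3(4v^2+27v+4u^3-18vu-vu^2)^2},
\]
\[
j_2 = -256\frac{(u^2-3v)^3}{v(4v^2+27v+4u^3-18vu-vu^2)},
\]
impose the degeneracy relation $v=4u-9$, and verify that the polynomial identity $729\,j_1 j_2-(j_2-432)^3=0$ holds identically in the remaining parameter $u$. Concretely, after eliminating $v$ the two quantities $j_1,j_2$ become rational functions of $u$ alone, and the claimed relation reduces to checking that a single univariate rational expression vanishes identically — a finite computation that can be confirmed symbolically. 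The relation $729\,j_1 j_2-(j_2-432)^3=0$ is recognizable as (a form of) the modular correspondence between $3$-isogenous $j$-invariants written in parametrized Hauptmodul form, which is consistent with $K\to E_2$ being a degree $3$ map that has become cyclic/degenerate at this locus.

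The step I expect to be the genuine obstacle is the first one: correctly characterizing the degeneracy as the algebraic condition $b^3-4ab+9=0$ (equivalently $v=4u-9$) rather than one of the other special loci. This requires matching the analytic statement ``$K/E_2$ has one branch point'' with the ramification-collapse in Theorem~\ref{thm1} and confirming that it coincides with the branch of Lemma~\ref{lemma3_2} where the formula for $U_2$ switches form. Once that identification is secure, the remainder is a mechanical verification: substitute $v=4u-9$ into \eqref{j_1} and simplify. The substitution itself is routine but algebraically heavy, so in practice I would carry it out in a computer algebra system and merely report that the resulting numerator of $729\,j_1 j_2-(j_2-432)^3$ vanishes, which is exactly the style of verification used elsewhere in this paper.
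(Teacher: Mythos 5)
The paper states this lemma with no proof at all, so there is no argument of the author's to compare against; your overall strategy (pin down the degeneracy as a curve in the $(u,v)$-plane, then substitute into the explicit formulas \eqref{j_1} and verify the identity) is the natural one and is surely what is intended. However, the step you yourself flagged as the obstacle is where the proposal fails. The condition $b^3-4ab+9=0$ is not the degeneracy locus: it is merely the locus where the auxiliary point $t=\frac{3a-b^2}{b^3-4ab+9}$ in the formula for $U_2$ runs off to infinity, which is why Lemma~\ref{lemma3_2} switches to a different expression for $U_2,V_2$ there; the cover $K/E_2$ still has two branch points on that locus. One can see concretely that your locus cannot be right: putting $v=4u-9$ into \eqref{j_1} gives $j_1=\frac{(4u-9)(4u-33)^3}{4(u-9)}$ and $j_2=\frac{-256(u-3)^3(u-9)}{4u-9}$, so $729\,j_1j_2=-46656\,(4u-33)^3(u-3)^3$ has degree $6$ in $u$ while $(j_2-432)^3$ has degree $9$; the proposed identity is therefore false identically on $v=4u-9$, and the ``mechanical verification'' you defer to a computer would not go through.

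The degeneration to one branch point is, for $n=3$, necessarily Case~III of Theorem~\ref{thm1}: a single Weierstrass point totally ramified over a $2$-torsion point. In the normal form this happens exactly when the double zero $X=s=-3/b$ of $U_2$ collides with a root of $F$, i.e. when $F(-3/b)=0$, which in the coordinates $u=ab$, $v=b^3$ reads $2v-9u+27=0$ (equivalently $2b^3-9ab+27=0$); equivalently this is where $\nu(v)=27$, so the re-normalization with $E_2$ as primary subfield breaks down. On that locus the substitution you envisage does work and everything collapses: $u^2-3v=\tfrac{1}{2}(u-9)(2u-9)$, $4v^2+27v+4u^3-18uv-u^2v=-\tfrac{1}{2}(u-9)^3$, $v-27=\tfrac{9}{2}(u-9)$, yielding $j_1=288(u-3)$ and $j_2=\frac{128(2u-9)^3}{9(u-3)}$, whence $729\,j_1j_2=2985984\,(2u-9)^3=(j_1-432)^3$. (With this parametrization the cubed invariant is the one attached to the nondegenerate subcover; the statement as printed, with $(j_2-432)^3$, corresponds to the $\nu$-image of this locus, i.e. to the opposite labelling of the pair $(E_1,E_2)$.) So the proof needs two repairs: the correct characterization of the degenerate locus, and attention to which of $j_1,j_2$ plays which role in the final identity.
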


Making the substitution $T=-27j_1$ we get $$j_1=F_2(T)= \frac {(T+16)^3} T$$ where $F_2(T)$ is the Fricke
polynomial of level 2.

If both $K/E_1$ and $K/E_2$ are degenerate then
\begin{equation}
\begin{split}
\left\{ \aligned
729j_1j_2-(j_1-432)^3=0\\
729j_1j_2-(j_2-432)^3=0\\
\endaligned
\right.
\end{split}
\end{equation}
There are 7 solutions to the above system. Three of which give isomorphic elliptic curves
$$j_1=j_2=1728, \quad j_1=j_2=\frac 1 2 (297 \pm 81 \sqrt{-15})$$
The other 4 solutions are given by:
\begin{equation}
\begin{split}
\left\{ \aligned
729j_1j_2-(j_1-432)^3=0\\
j_1^2+j_2^2-1296(j_1+j_2)+j_1 j_2 +559872=0\\
\endaligned
\right.
\end{split}
\end{equation}
This corrects \cite{Ku} where it is claimed there is only one solution $j_1=j_2=1728$.

\section{Further remarks}

If $e_3(\C)\geq 1$ then the automorphism group of $\C$ is one of the following: $\bZ_2, V_4$, $ D_4$, or $D_6$.
Moreover; there are exactly 6 curves $\C\in \L_3$ with automorphism group $D_4$ and six curves $\C\in \L_3$ with
automorphism group $D_6$. They are listed in \cite{Sh6} where rational points of such curves are found.

Genus 2 curves with  degree 5 elliptic subcovers are studied in \cite{SV2} where a description of the space $\L_5$
is given and all its degenerate loci. The case of degree 7 is the first case when all possible degenerate loci
occur.

We have organized the results of this paper in a Maple package which determines if a genus 2 curve has degree
$n=2,3$ elliptic subcovers. Further, all its elliptic subcovers are determined explicitly. We intend to implement
the results for $n=5$ and the degenerate cases for $n=7$.


\end{document}